\newtheorem{theo}{Theorem}
\newtheorem{coro}{Corollary}
\newtheorem{prop}{Proposition}
\newtheorem{question}{Question}
\theoremstyle{remark}
\newtheorem{rema}{\bf Remark}
\newtheorem{example}{\bf Example}
\begin{document}

\title{Generalized Fermat Riemann surfaces of infinite type}

\author{Rub\'en A. Hidalgo}
\address{Departamento de Matem\'atica y Estad\'{\i}stica, Universidad de La Frontera. Temuco, Chile}
\email{ruben.hidalgo@ufrontera.cl}

\thanks{Partially supported by Projects Fondecyt 1230001 and 1220261}

\subjclass[2010]{Primary 30F10; 30F20; 14H37; 14H55; 32G15}
\keywords{Riemann surfaces, Automorphisms}

%%%%%%%%%%%%%%%%%
%%%%%%%%%%%%%%%%%

\begin{abstract}
The  Loch Ness monster (LNM) is, up to homeomorphisms, the unique orientable, connected, Hausdorff, second countable surface of infinite genus and with exactly one end. 
For each integer $k \geq 2$, we construct Riemann surface structures $S$ on the LNM admitting a group of conformal automorphisms $H \cong {\mathbb Z}_{k}^{\mathbb N}$ such that $S/H$ is planar. These structures can be described algebraically inside the projective space ${\mathbb P}^{\mathbb N}$ after deleting some limit points.
\end{abstract}

\maketitle

%%%%%%%%%%%%%%%%%
%%%%%%%%%%%%%%%%%
\section{Introduction}
A closed Riemann surface $S$ is called a generalized Fermat curves (GFC) of type $(k,n)$, where $k,n \geq 2$ are integers, if it 
admits a group $H \cong {\mathbb Z}_{k}^{n}$ of conformal automorphisms (called a generalized Fermat group of type $(k,n)$) such that the quotient Riemann orbifold $S/H$ has genus zero and exactly $n+1$ cone points, each one of order $k$ (in other words, the homology cover of a Riemann orbifold of genus zero with exactly $n+1$ cone points of order $k$). These Riemann surfaces are non-hyperelliptic and they can be algebraically described by  
a smooth and irreducible complex projective algebraic curve (a suitable fiber product of $n-1$ classical Fermat curves of degree $k$) reflecting the action of $H$ \cite{GHL09}.
Moreover, in \cite{HKLP17}, it was proved that the generalized Fermat group is unique under the assumption that $(k-1)(n-1)>2$ (this is similar to the hyperelliptic involution in the case of hyperelliptic Riemann surfaces). This paper aims to extend the above to the case of non-compact Riemann surfaces.

Let $k \geq 2$ be an integer. We say that a non-compact (connected) Riemann surface $S$ is a {\bf ${\mathbb Z}_{k}^{\infty}$-gonal surface} if:
\begin{enumerate}
\item[(i)] $S$ admits a group $H \cong {\mathbb Z}_{k}^{\mathbb N}$ (the infinite countable product of cyclic groups of order $k$) of conformal automorphisms, 
\item[(ii)] there is an infinite discrete set ${\mathcal B} \subset \widehat{\mathbb C}$ such that its limit set ${\mathcal B}' \neq \emptyset$ satisfies that $\widehat{\mathbb C} \setminus {\mathcal B}'$ is connected, and 
\item[(iii)] there is a Galois branched covering $\pi:S \to \widehat{\mathbb C} \setminus {\mathcal B}'$, with deck group $H$, whose branch locus is ${\mathcal B}$  (each branch value of order $k$). 
\end{enumerate}

In the above situation, we also say that $H$ is a {\bf ${\mathbb Z}_{k}^{\infty}$-gonal group} and that $(S,H)$ is a {\bf ${\mathbb Z}_{k}^{\infty}$-gonal pair}.

If, moreover, (a maximality condition)
\begin{enumerate}
\item[(iv)] for every Riemann surface $R$ admitting a group of automorphisms ${\mathbb Z}_{k}^{\mathbb N} \cong G<{\rm Aut}(R)$ such that there is a Galois branched covering map $P:R \to \widehat{\mathbb C} \setminus {\mathcal B}'$, with deck group $G$, whose branch value set is ${\mathcal B}$, and each branch value of order $k$, then there is a subgroup $J<H$ (acting freely on $S$) such that, up to biholomorphisms, $R=S/J$ and $G=H/J$,
\end{enumerate}
then we say that $S$ is a {\bf generalized Fermat curve of type $(k,\infty)$} (or of infinite type if $k$ is clear), 
that $H$ is a {\bf generalized Fermat group of type $(k,\infty)$} and that $(S,H)$ is a {\bf generalized Fermat pair of type $(k,\infty)$}.

Note that, in the case of generalized Fermat pairs of finite type, we didn't need to add the maximality condition (iv) as it is automatically satisfied by Corollary \ref{teo2}. In the infinite type situation, this condition (iv) is not a consequence of the conditions (i)-(iii). 

\medskip

Our first result is Theorem \ref{PropLNM}, which states that every ${\mathbb Z}_{k}^{\infty}$-gonal surface, in particular every generalized Fermat curve of type $(k,\infty)$, is homeomorphic to the Loch Ness monster (LNM), that is, up to homeomorphisms the unique orientable, connected, Hausdorff, second countable surface of infinite genus and with exactly one end. The classification of 
orientable, connected, Hausdorff, and second countable surfaces can be found in \cite{Ker,Ian} (see Section \ref{Sec:Clasifica}).

So, if $X_{L}$ denotes the LNM, then the above provides some classes of Riemann surface structures on it. Other Riemann surface structures on $X_{L}$ where provided 
in \cite{AGHQR, CHTV, HM} in terms of dessins dénfants and origamis. Also, if we consider a hyperbolic Riemann surface $R={\mathbb H}^{2}/\Gamma$ with a negative Euler characteristic (i.e., its fundamental group is non-abelian) and not homeomorphic to a once-punctured closed surface, then its homology cover $\widetilde{R}={\mathbb H}^{2}/\Gamma'$, where $\Gamma'$ denotes the derived (commutator) subgroup of $\Gamma$ (i.e., the normal subgroup generated by all the commutators) is homeomorphic to the LNM \cite{Basmajian-Hidalgo}.
In \cite{Allcock,APV}, it was observed that if $G$ is any countable group, then it can be realized as a group of orientation-preserving homeomorphism on $X_{L}$, acting freely and properly discontinuously, such that the quotient $X_{L}/G$ is homeomorphic to $X_{L}$. This, in particular, asserts that any Riemann surface structure on $X_{L}=X_{L}/G$ lifts to a Riemann surface structure on $X_{L}$ for which $G$ acts as a group of conformal automorphisms.

\medskip

Let $(S,H)$ be a generalized Fermat pair of type $(k,\infty)$ and let 
${\mathcal B}$ and ${\mathcal B}'$ be the associated sets as in the above definition. 
Up to a M\"obius transformation, we may assume ${\mathcal B}=\{\infty,0,1,\lambda_{1}, \lambda_{2},\ldots\} \subset \widehat{\mathbb C}$.
Let ${\mathcal O}$ be the Riemann orbifold whose Riemann surface structure is $\widehat{\mathbb C} \setminus{\mathcal B}'$ and whose set of cone points, each one of order $k$, is ${\mathcal B}$. Let $\Gamma$ be a Fuchsian group such that ${\mathcal O}={\mathbb H}^{2}/\Gamma$. Let $\Gamma_{k}$ be the characteristic subgroup of $\Gamma$ that is generated by its derived subgroup $\Gamma'$ together with the $k$-powers of all of the elements in $\Gamma$. 
Theorem \ref{teouniformiza} asserts that 
$({\mathbb H}^{2}/\Gamma_{k}, \Gamma/\Gamma_{k})$ is a generalized Fermat pair of type $(k,\infty)$ which is isomorphic to $(S,H)$ (i.e., there is biholomorphism $\varphi:{\mathbb H}^{2}/\Gamma_{k} \to S$ such that $\varphi^{-1} H \varphi=\Gamma/\Gamma_{k}$). 
As a consequence, if $(R,K)$ is a ${\mathbb Z}_{k}^{\infty}$-gonal pair, then there is a generalized Fermat pair $(S,H)$ of type $(k,\infty)$ and a subgroup $L \leq H$ such that $R=S/L$ and $K=H/L$.

For each finite set 
$M \subset {\mathcal B}'$ (which it might be empty), we explicitly construct a ${\mathbb Z}_{k}^{\infty}$-gonal surface $C^{k}_{\infty}({\mathcal B};M) \subset {\mathbb P}^{\mathbb N}$ (given as the common zeroes of an infinite set of polynomials, from which we have deleted certain closed subset of a Cantor set) and a ${\mathbb Z}_{k}^{\infty}$-gonal group $H^{k}_{\infty} \leq {\rm Aut}(C^{k}_{\infty}({\mathcal B};M))$, such that there is a Galois branched covering $\pi_{\infty}:C^{k}_{\infty}({\mathcal B};M) \to \widehat{\mathbb C} \setminus {\mathcal B}'$, with deck group $H^{k}_{\infty}$, and whose branch points are exactly the points in ${\mathcal B}$, each one of order $k$ (Theorem \ref{main0}), in particular, that $(C^{k}_{\infty}({\mathcal B};M),H^{k}_{\infty})$ is ${\mathbb Z}_{k}^{\infty}$-gonal pair. In general, 
it might fail the maximality condition, i.e., it might be that $(C^{k}_{\infty}({\mathcal B};M),H^{k}_{\infty})$ is not a generalized Fermat pair of type $(k,\infty)$.
Theorem \ref{main}, asserts that in the case that ${\mathcal B}'$ is finite, then $(S,H)$ is biholomorphically equivalent to $(C^{k}_{\infty}({\mathcal B};M),H^{k}_{\infty})$, where $M$ is obtained from ${\mathcal B}'$ by deleting one of its points. (A fiber product interpretation is also provided in Section \ref{Sec:fiber}).
 
\medskip

Finally, in Section \ref{Sec:hiper}, we consider hyperelliptic Riemann surface structures of infinite type. These are obtained as quotients of generalized Fermat curves of type $(2,\infty)$ (by an index two subgroup of the corresponding generalized Fermat group). Such quotients are homeomorphic to the LNM if and only if $\#{\mathcal B}'=1$ (Proposition \ref{prop3}). In the case that ${\mathcal B}'$ is finite, we also provide algebraic equations of these surfaces (Section \ref{Sec:ecuaciones}).

%%%%%%%%%%%%%
\subsection*{Notations}
If $S$ is a Riemann surface, then we will denote by ${\rm Aut}(S)$ its group of conformal automorphisms. The cyclic group of order $k$ will be denoted by ${\mathbb Z}_{k}$, by ${\mathbb Z}_{k}^{n}$ the direct product of $n$ copies of ${\mathbb Z}_{k}$. If ${\mathbb N}=\{1,2,3,\ldots\}$, the set of natural numbers, then we denote by ${\mathbb Z}_{k}^{\mathbb N}$ the product of countable infinite copies of ${\mathbb Z}_{k}$. For $J$ any set, we denote by $J^{\mathbb N}$ the cartesian product of infinite countable copies of $J$.

%%%%%%%%%%%%%%%%%%%%
%%%%%%%%%%%%%%%%%%
\section{Preliminaries}
%%%%%%%%%%%%%%%

%%%%%%%%%%%%%%%
\subsection{Classification of surfaces}\label{Sec:Clasifica}
We proceed to recall the classification of surfaces due to Ker\'ekjart\'o \cite{Ker}  and Richards \cite{Ian}.
In the next, $X$ will denote a given surface. 

\subsubsection{\bf Subsurfaces and the genus of $X$}
By a \emph{subsurface} of $X$ we mean an embedded surface, which is a closed subset of $X$ and whose boundary consists of a finite number of nonintersecting simple closed curves. Note that a subsurface might or not be compact.
The {\it reduced genus} of a compact subsurface $Y$,  with $q(Y)$ boundary curves and Euler characteristic $\chi(Y)$, is the number $g(Y)=1-\frac{1}{2}(\chi(Y)+q(Y))$. 
The {\it genus} of the surface $X$ is the supremum of the genera of its compact subsurfaces; so it can be a non-negative integer or $\infty$. 

A surface $Z$ is said to be {\it planar} if it has genus zero (in other words, $Z$ is homeomorphic to a domain of the $2$-sphere). 

\subsubsection{\bf The ends space of $X$} 
 We first recall the concepts of ends of $X$ following \cite[1. Kapitel]{Freudenthal}.
 
A sequence $(U_n)_{n\in\mathbb{N}}$, of non-empty connected open subsets of $X$,
is called {\it nested} if the following conditions hold:
\begin{enumerate}
\item for each $n\in\mathbb{N}$, the closure $\overline{U}_{n}$ is a subsurface of $X$,

\item $U_{1}\supset U_{2}\supset\ldots$, and

\item for each compact  $K\subset X$ there is $m\in\mathbb{N}$ such that $K\cap U_m =\emptyset$.
\end{enumerate}

Note that the last condition asserts, in particular, that $\cap_{n\in\mathbb{N}}\overline{U_{n}}=\emptyset$.

Two nested sequences $(U_n)_{n\in\mathbb{N}}$ and $(U'_{n})_{n\in\mathbb{N}}$ are {\it equivalent} if for each $n\in\mathbb{N}$ there exist $j,k\in\mathbb{N}$ such that $U_{n}\supset U'_{j}$, and $U'_{n}\supset U_{k}$. We denote by the symbol $[U_{n}]_{n\in\mathbb{N}}$ the equivalence class of $(U_n)_{n\in\mathbb{N}}$ and we called it an {\it end} of $X$.  The set of ends of $X$ is denoted by ${\rm Ends}(X)$ and it has a structure of a topological space (called the {\it ends space}  of $X$) whose basis is given by the sets
\begin{equation*}
U^{*}:=\{[U_{n}]_{n\in\mathbb{N}}\in{\rm Ends}(X)\hspace{1mm}|\hspace{1mm}U_{j}\subset U\hspace{1mm}\text{for some }j\in\mathbb{N}\},
\end{equation*}
where $U \subset X$ is a non-empty open subset whose boundary $\partial U$ is compact. It is known that ${\rm Ends}(X)$ is Hausdorff, totally disconnected
and compact (so a closed subset of the Cantor set) \cite[Theorem 1.5.]{Ray}.

An end $[U_n]_{n\in\mathbb{N}}$ of $X$ is called {\it planar} if there is $l\in\mathbb{N}$ such that the subsurface $\overline{U}_l\subset X$ is planar. The subset of 
${\rm Ends}(X)$, conformed by all ends of $X$ which are not planar ({\it ends having infinity genus}), is denoted by ${\rm Ends}_{\infty}(X)$. It is known that ${\rm Ends}_{\infty}(X)$ is a closed subset of ${\rm Ends}(X)$ (see \cite[p. 261]{Ian}). The following asserts that $(g,{\rm Ends}_{\infty}(X),{\rm Ends}(X))$, where $g$ is the genus of $X$, is a topological invariant.

\begin{theo}[Ker\'ekjart\'o-Richards classification of surfaces {\cite[\S 7, Erster Abschnitt]{Ker}, \cite[Theorem 1]{Ian}}]
Two surfaces $X_1$ and $X_2$  having the same genus are topologically equivalent if and only if there exists a homeomorphism $f: {\rm Ends}(X_1)\to {\rm Ends}(X_2)$ such that $f( {\rm Ends}_{\infty}(X_1))= {\rm Ends}_{\infty}(X_2)$.
\end{theo}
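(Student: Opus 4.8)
\emph{Proof plan.} The plan is to prove the two implications separately. The forward implication is formal: a homeomorphism $h\colon X_1\to X_2$ carries compact sets to compact sets and connected open sets to connected open sets, hence sends nested sequences to nested sequences and respects the equivalence relation defining ends, so it induces a bijection $h_{*}\colon{\rm Ends}(X_1)\to{\rm Ends}(X_2)$; this bijection is a homeomorphism because $h$ permutes the basic sets $U^{*}$ (it preserves open sets with compact boundary). Since $h$ restricts to homeomorphisms between compact subsurfaces, preserving Euler characteristic and boundary-curve count, it preserves reduced genus, hence the genus of the ambient surface, hence planarity of ends; therefore $h_{*}({\rm Ends}_{\infty}(X_1))={\rm Ends}_{\infty}(X_2)$. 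The substance is the converse, which I would prove following the strategy of \cite{Ian}, via canonical exhaustions.

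So assume $g(X_1)=g(X_2)=g$ and let $f\colon{\rm Ends}(X_1)\to{\rm Ends}(X_2)$ be a homeomorphism with $f({\rm Ends}_{\infty}(X_1))={\rm Ends}_{\infty}(X_2)$. The main tool is a \emph{canonical exhaustion}: for each $i$ one fixes an increasing sequence of connected compact subsurfaces $P^{(i)}_1\subset P^{(i)}_2\subset\cdots$ with $P^{(i)}_n\subset{\rm int}\,P^{(i)}_{n+1}$, $\bigcup_n P^{(i)}_n=X_i$, and every component of $X_i\setminus{\rm int}\,P^{(i)}_n$ noncompact; such exhaustions exist by starting from an arbitrary exhaustion by compact subsurfaces and absorbing the relatively compact complementary pieces. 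The closures of the finitely many components of $X_i\setminus{\rm int}\,P^{(i)}_n$, as $n$ varies, are the vertices of a locally finite rooted tree $T_i$ (an edge joins a level-$n$ component to each level-$(n+1)$ component it contains); then ${\rm Ends}(X_i)$ is naturally homeomorphic to the space of infinite branches of $T_i$, and a branch lies in ${\rm Ends}_{\infty}(X_i)$ exactly when infinitely many of the ``collars'' $P^{(i)}_{n+1}\setminus{\rm int}\,P^{(i)}_n$ met along it have a component of positive reduced genus. Refining the exhaustion, one may further assume each collar component carries at most one unit of genus, and that when $g<\infty$ all collars past a fixed level have genus zero.

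Next, using $f$ together with the compactness and total disconnectedness of the ends spaces, one passes to subsequences of the two exhaustions and produces a level-preserving isomorphism between subdivisions of $T_1$ and $T_2$ that: (i) matches, for every $n$, the components of $X_1\setminus{\rm int}\,P^{(1)}_n$ with those of $X_2\setminus{\rm int}\,P^{(2)}_n$ compatibly with nesting and inducing $f$ on ends; (ii) matches the sub-branches landing in ${\rm Ends}_{\infty}$; and (iii) matches the reduced genus and boundary-curve counts of corresponding collars. Point (iii) is achievable because the total genus is $g$ on both sides and, once (ii) is arranged, the only remaining constraint is that non-planar branches receive genus at infinitely many levels and planar ones eventually do not, so any finite discrepancy can be absorbed by redistributing a bounded amount of genus among the first few collars. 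Granting this matching, one builds $X_1\to X_2$ by induction along the exhaustion: given a homeomorphism $P^{(1)}_n\to Q_n$ onto the matched compact subsurface of $X_2$, one extends it across the collar $P^{(1)}_{n+1}\setminus{\rm int}\,P^{(1)}_n$ onto the matched collar of $X_2$, using that each collar component is a connected compact bordered surface whose type is determined by its reduced genus and boundary-curve number, and that any homeomorphism prescribed on the boundary curves facing $P_n$ extends over the collar once the remaining data agree — which is exactly (i)–(iii). The inductively defined maps agree on overlaps and glue to the desired homeomorphism.

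The delicate step is the combinatorial matching in the previous paragraph: producing a level-preserving tree isomorphism simultaneously compatible with $f$, with the subset ${\rm Ends}_{\infty}$, and with the genus bookkeeping. This requires an interleaving argument — given a level $n$ on the $X_1$ side, use continuity of $f$ and $f^{-1}$ and the clopen basis of ${\rm Ends}(X_i)$ to find a level $m$ on the $X_2$ side whose complementary components refine the $f$-images of those at level $n$, then refine back, and iterate — while carrying along the requirement that non-planar branches are fed genus infinitely often and planar ones are not, and redistributing a bounded amount of genus among early collars to equate the counts exactly. Once this bookkeeping is arranged, the geometric extension step is routine given the classification of compact bordered surfaces and the extendability of boundary homeomorphisms.
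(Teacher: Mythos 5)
The paper gives no proof of this statement: it is quoted verbatim from Ker\'ekjart\'o and Richards, and the cited source (Richards, \emph{Trans.\ Amer.\ Math.\ Soc.} 106 (1963), Theorem 1) proves it exactly by the route you outline --- canonical exhaustions with noncompact complementary components, the tree of complementary pieces identifying ${\rm Ends}(X)$ with its branch space, genus bookkeeping along collars, and an inductive extension over matched collars. Your plan is correct (in the orientable setting the paper works in, where the type of a compact bordered piece is indeed determined by reduced genus and boundary count) and coincides with the argument of the reference the paper relies on.
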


%%%%%%%%%%%%%%%%%%%%%%%%%%%%%%%
\subsubsection{\bf The Loch Ness monster}\label{d:loch_nesss_monster}
 The {\it Loch Ness monster}\footnote{From the historical point of view as shown in \cite{AR}, this
nomenclature is due to A. Phillips and D. Sullivan \cite{Ph-S}.} (LNM) is the unique, up to homeomorphisms, surface of infinite genus and exactly one end (see Figure \ref{F:LMN1}).
\begin{figure}[h!]
\begin{center}	
\begin{tikzpicture}[baseline=(current bounding box.north)]
\begin{scope}[scale=0.6]
\clip (-6,-1.5) rectangle (8,2);
\draw [line width=1pt] (-2.5,0) to[out=90,in=180] (-1.5,1.5);
\draw [line width=1pt] (-1.5,1.5) to[out=0,in=90] (-0.5,0);
\draw [line width=1pt] (-2,0) to[out=90,in=180] (-1.5,1);
\draw [line width=1pt] (-1.5,1) to[out=0,in=90] (-1,0);
%%%%%%%%%%%%%%%%%%%%%%%%%%%%%%%%%%%%%%%%%%
%%%%%%%%%%%%%%%%%%%%%%%%%%%%%%%%%%%%%%%%%%%%
\draw [dashed, line width=1pt]  (-2.25,0) ellipse (2.5mm and 1mm);
        \draw [dashed, line width=1pt]  (-0.75,0) ellipse (2.5mm and 1mm);
        \draw [dashed, line width=1pt]  (-1.5,1.25) ellipse (1mm and 2.5mm);
%%%%%%%%%%%%%%%
\draw [line width=1pt] (0,0) to[out=90,in=180] (1,1.5);
\draw [line width=1pt] (1,1.5) to[out=0,in=90] (2,0);
\draw [line width=1pt] (0.5,0) to[out=90,in=180] (1,1);
\draw [line width=1pt] (1,1) to[out=0,in=90] (1.5,0);
%%%%%%%%%%%%%%%%%%%%%%
%%%%%%%%%%%
\draw [dashed, line width=1pt]  (0.25,0) ellipse (2.5mm and 1mm);
\draw [dashed, line width=1pt]  (1.75,0) ellipse (2.5mm and 1mm);
\draw [dashed, line width=1pt]  (1,1.25) ellipse (1mm and 2.5mm);
%%%%%%%%%%%%%%%%%%%%%%%%%%%%
%%%%%%%%%%%%
\draw [line width=1pt] (2.5,0) to[out=90,in=180] (3.5,1.5);
\draw [line width=1pt] (3.5,1.5) to[out=0,in=90] (4.5,0);
\draw [line width=1pt] (3,0) to[out=90,in=180] (3.5,1);
\draw [line width=1pt] (3.5,1) to[out=0,in=90] (4,0);
%%%%%%%%%%%%%%%%%%%%%%%%
%%%%%%%%%
\draw [dashed, line width=1pt]  (2.75,0) ellipse (2.5mm and 1mm);
\draw [dashed, line width=1pt]  (4.25,0) ellipse (2.5mm and 1mm);
\draw [dashed, line width=1pt]  (3.5,1.25) ellipse (1mm and 2.5mm);
%%%%%%%%%%%%%%%%%%%%%%%%%%%
%%%%%%%%%%%%%%%%%%%%%%
%%%%%%%%%%%%%%%%%%%%%%%%%%%%%%%%%%%
\draw [line width=1pt](-3,-0.5) -- (5,-0.5);
\draw [line width=1pt](-3,0.3) -- (-2.48,0.3);
\draw [line width=1pt](-2,0.3) -- (-1,0.3);
\draw [line width=1pt](-0.5,0.3) -- (0,0.3);
\draw [line width=1pt](0.5,0.3) -- (1.5,0.3);
\draw [line width=1pt](1.96,0.3) -- (2.5,0.3);
\draw [line width=1pt](3,0.3) -- (4,0.3);
\draw [line width=1pt](4.5,0.3) -- (5,0.3);
\draw [line width=1pt](-3,-0.5) -- (-3,0.3);
\draw [line width=1pt](5,-0.5) -- (5,0.3);
\node at (5.5,0) {$\ldots$};
\node at (-3.5,0) {$\ldots$};
\node at (1.1,-1) {$\vdots$};
\end{scope}
\end{tikzpicture} 
\caption{\emph{Loch Ness monster}}
\label{F:LMN1}
\end{center}	
\end{figure}
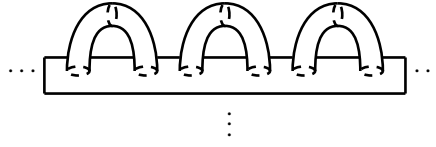

\begin{rema}[{\cite[\S 5.1., p. 320]{SPE}}] The surface $X$ has one end if and only if for all compact subset $K \subset X$ there is a compact $K^{'}\subset X$ such that $K\subset K^{'}$ and $X\setminus  K^{'}$ is connected.
\end{rema}

%%%%%%%%%%%%%%%%
%%%%%%%%%%%%%%%
%\section{Generalized Fermat curves of finite type}
%In this section, we recall the definition of generalized Fermat curves of finite type and some of their properties. 

%%%%%%%%%%%%%%%%
\subsection{The infinite projective space}\label{Sec:projective}
Let ${\mathbb C}^{\mathbb N}$ be the countable product space of the complex plane, with the product space topology (so it is Hausdorff, metrizable and complete, second countable). We consider the infinite projective space (with the quotient topology)
$${\mathbb P}^{\mathbb N}:=({\mathbb C}^{\mathbb N} \setminus \{0\})/{\mathbb C}^{*}.$$

\begin{rema}
In the above notation, ${\mathbb P}^{\mathbb N}$ must not be confused with the infinite projective space ${\mathbb P}^{\infty}$ obtained as the direct limit of the finite-dimensional projective spaces. The direct limit can be identified with the sublocus given by those elements with a finite number of coordinates different from zero.
\end{rema}

We denote by $\pi:{\mathbb C}^{\mathbb N}\setminus\{0\} \to {\mathbb P}^{\mathbb N}$ the quotient map and by the symbol $[x_{1}:x_{2}:\cdots]$ the projection of the point $(x_{1},x_{2},\ldots)$. Note that $\pi$ is continuous (as we are providing to ${\mathbb P}^{\mathbb N}$ with the quotient topology) and open (as, for each $\lambda \in {\mathbb C} \setminus \{0\}$, the bijective map $\phi_{\lambda}(x_{1},x_{2},\ldots)=(\lambda x_{1}, \lambda x_{2}, \ldots)$ is a homeomorphism). The projective space ${\mathbb P}^{\infty}$ is Hausdorff, second countable and it is covered by the open sets $U_{j}=\{[x_{1}:x_{2}:\cdots]: x_{j} \neq 0\}$ (so, it can be seen as an infinite dimensional manifold which is locally homeomorphic to ${\mathbb C}^{\mathbb N}$).

We will need  the following bijective projective linear maps ($\omega_{k}=e^{2 \pi i/k}$):
$$a_{j}([x_{1}:x_{2}:\cdots])=[x_{1}:\cdots:x_{j-1}:\omega_{k} x_{j}: x_{j+1}:\cdots], \quad j=1,2,\ldots.$$

%%%%%%%%%%%%%%%%%%%%%%%%%%
%%%%%%%%%%%%%%%%%%%%%%%%%%%
\subsection{Generalized Fermat curves of finite type}\label{Sec:GFC}
As a consequence of the Riemann-Roch theorem \cite{FK}, every closed Riemann surface $S$ can be described (up to birational isomorphisms) by some smooth irreducible complex algebraic projective curve $C$. In general, if $H$ is a given group of conformal automorphisms of $S$, then finding one of these algebraic curves $C$ from which one can read the action of $H$ is a hard problem. 
For the case that $(S,H)$ is a generalized Fermat pair of type $(k,n)$, this is well known \cite{GHL09} We proceed to recall this below.

Let us consider a generalized Fermat pair $(S,H)$ of type $(k,n)$.

By the definition, there is a Galois branched covering $\pi:S \to \widehat{\mathbb C}$, with deck group $H$, whose branch values set is (up to a M\"obius transformation) 
 $${\mathcal B}_{\pi}=\{\infty, 0, 1, \lambda_{1}, \ldots, \lambda_{n-2}\}.$$

It follows from the Riemann-Hurwitz formula that $S$ has genus
$$g=1+\frac{k^{n-1}\left((k-1)(n-1)-2\right)}{2}.$$

In particular, $S$ is hyperbolic (i.e., its universal cover Riemann surface is the hyperbolic plane ${\mathbb H}^{2}$) if and only if $(k-)(n-1)>2$.

Let us consider the following smooth and irreducible complex projective algebraic curve
$$C^{k}(\lambda_{1},\ldots,\lambda_{n-2}):=
\left\{
\begin{array}{rcc}
x_{1}^{k}+x_{2}^{k}+x_{3}^{k}&=&0\\
\lambda_{1}x_{1}^{k}+x_{2}^{k}+x_{4}^{k}&=&0\\
&\vdots&\\
\lambda_{n-2}x_{1}^{k}+x_{2}^{k}+x_{n+1}^{k}&=&0
\end{array}
\right\}
\subset {\mathbb P}^{n}.
$$

The above curve is invariant under the projective linear maps (where $\omega_{k}=e^{2 \pi i/k}$)
$$a_{j,n}([x_{1}:\cdots:x_{n+1}])=[x_{1}:\cdots:x_{j-1}:\omega_{k} x_{j}: x_{j+1}:\cdots:x_{n+1}], \quad j=1,\ldots,n+1.$$

We may observe that
\begin{enumerate}
\item $a_{1,n}a_{2,n}\cdots a_{n+1,n}=1$;
\item $a_{j,n} \in {\rm Aut}(C^{k}(\lambda_{1},\ldots,\lambda_{n-2}))$ has order $k$;
\item $H_{n}^{k}:=\langle a_{1,n},\ldots,a_{n,n}\rangle \cong {\mathbb Z}_{k}^{n}$;
\item the only non-trivial elements of $H_{n}^{k}$ acting with fixed points on the Riemann surface $C^{k}(\lambda_{1},\ldots,\lambda_{n-2})$ are the powers of $a_{1,n},\ldots,a_{n+1,n}$, and the fixed points of each power $a_{j,n}^{l}$, $l=1,\ldots,k-1$, is also fixed by $a_{j,n}$;
\item the group $H_{n}^{k}$ is the deck group of the Galois branched covering
$$\pi_{n}:C^{k}(\lambda_{1},\ldots,\lambda_{n-2}) \to \widehat{\mathbb C}:[x_{1}:\cdots:x_{n+1}] \mapsto -\left(\frac{x_{2}}{x_{1}}\right)^{k};$$
\item the set of branch values of $\pi_{n}$ is ${\mathcal B}_{\pi}$;
\item $\pi_{n}({\rm Fix}(a_{1,n}))=\infty$, $\pi_{n}({\rm Fix}(a_{2,n}))=0$, $\pi_{n}({\rm Fix}(a_{3,n}))=1$ and, for $j=1,\ldots, n-2$, $\pi_{n}({\rm Fix}(a_{3+j,n}))=\lambda_{j}$,
\end{enumerate}

It follows, from the above, that $(C^{k}(\lambda_{1},\ldots,\lambda_{n-2}),H_{n}^{k})$ is a generalized Fermat pair of type $(k,n)$.

\begin{theo}[\cite{GHL09,HKLP17}]
Within the above notations, the following hold.
\begin{enumerate}
\item There is a biholomorphism $\varphi:S \to C^{k}(\lambda_{1},\ldots,\lambda_{n-2})$ which conjugates $H$ to $H_{n}^{k}$.

\item If $(k-1)(n-1)>2$, then
\begin{enumerate}
\item $S$ is non-hyperelliptic.
\item $H$ is the unique generalized Fermat group of type $(k,n)$ in ${\rm Aut}(S)$.
\item If $\Gamma <{\rm PSL}_{2}({\mathbb R})$ is a Fuchsian group such that ${\mathbb H}^{2}/\Gamma = S/H$ (as orbifolds), then $S={\mathbb H}^{2}/\Gamma'$ and $H=\Gamma/\Gamma'$, where $\Gamma'$ is the commutator subgroup of $\Gamma$.
\end{enumerate}
\end{enumerate}
\end{theo}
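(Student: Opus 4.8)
The backbone is part (1); part (2c) will then fall out of it almost for free, part (2a) is an exercise on finite subgroups of $\mathrm{PSL}_{2}(\mathbb{C})$, and part (2b) is the only serious difficulty. \emph{Part (1).} A Galois branched covering $S\to\widehat{\mathbb{C}}$ with deck group $H$ and branch locus $\mathcal{B}_{\pi}=\{\infty,0,1,\lambda_{1},\dots,\lambda_{n-2}\}$, each branch value of order $k$, is determined up to equivariant biholomorphism by its monodromy: a surjection $\rho\colon\Gamma_{0}\twoheadrightarrow H$ from the orbifold fundamental group $\Gamma_{0}=\langle c_{1},\dots,c_{n+1}\mid c_{i}^{k}=1,\ c_{1}\cdots c_{n+1}=1\rangle$ of the sphere with $n+1$ cone points of order $k$, subject to $\rho(c_{i})$ having order \emph{exactly} $k$. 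Since $H\cong\mathbb{Z}_{k}^{n}$ is abelian, $\rho$ factors through the abelianization of $\Gamma_{0}$, which a direct computation (eliminating $c_{n+1}=-(c_{1}+\dots+c_{n})$) identifies with $\mathbb{Z}_{k}^{n}$, the image of every $c_{i}$ having order $k$; a surjection between two finite groups of the same order is an isomorphism, so after a suitable choice of the identification $H\cong\mathbb{Z}_{k}^{n}$ the map $\rho$ is the canonical abelianization. Hence such a covering is unique up to equivariant biholomorphism. The items (1)--(7) recalled above exhibit $\pi_{n}\colon C^{k}(\lambda_{1},\dots,\lambda_{n-2})\to\widehat{\mathbb{C}}$ as one such covering, so the uniqueness yields a biholomorphism $\varphi\colon S\to C^{k}(\lambda_{1},\dots,\lambda_{n-2})$ with $\pi_{n}\circ\varphi=\pi$, and the equivariance of $\varphi$ is precisely the conjugacy $\varphi H\varphi^{-1}=H_{n}^{k}$. (Reordering the $\lambda_{j}$ or pre-composing with a M\"obius transformation changes $C^{k}(\lambda_{1},\dots,\lambda_{n-2})$ only up to isomorphism by \cite{GHL09}, so fixing $\mathcal{B}_{\pi}$ costs no generality.)

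\emph{Part (2a).} Assume $(k-1)(n-1)>2$; then $g\ge 2$ by the Riemann--Hurwitz computation recalled above. Suppose $S$ is hyperelliptic, with hyperelliptic involution $\iota$; being unique, $\iota$ is central in $\mathrm{Aut}(S)$, hence commutes with $H$, and $S/\langle\iota\rangle\cong\widehat{\mathbb{C}}$. A finite abelian subgroup of $\mathrm{Aut}(\widehat{\mathbb{C}})=\mathrm{PSL}_{2}(\mathbb{C})$ is cyclic or a Klein four-group. If $\iota\notin H$, then $H\cap\langle\iota\rangle=\{1\}$, so $\mathbb{Z}_{k}^{n}\cong H$ injects into $\mathrm{PSL}_{2}(\mathbb{C})$; but $\mathbb{Z}_{k}^{n}$ is cyclic only for $n=1$ and a Klein four-group only for $(k,n)=(2,2)$, both incompatible with $n\ge 2$ and $(k-1)(n-1)>2$. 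If $\iota\in H$, then $H/\langle\iota\rangle$ injects into $\mathrm{PSL}_{2}(\mathbb{C})$; it has exponent dividing $k$ but order $k^{n}/2>k$ (as $(k-1)(n-1)>2$ forces $k^{n-1}>2$), hence is not cyclic, and it is a Klein four-group only for $(k,n)=(2,3)$, again excluded. Both cases are impossible, so $S$ is non-hyperelliptic.

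\emph{Part (2c).} Under the standing hypothesis $S/H$ is a hyperbolic orbifold (its orbifold Euler characteristic equals $-\big((k-1)(n-1)-2\big)/k<0$), so $\Gamma$ is the corresponding cocompact Fuchsian group, with presentation as $\Gamma_{0}$ above. By the computation in part (1) its abelianization is $\mathbb{Z}_{k}^{n}$ with the image of every elliptic generator $c_{i}$ of order exactly $k$, so no nontrivial power of any $c_{i}$ lies in $\Gamma'$; since every torsion element of $\Gamma$ is conjugate to such a power and $\Gamma'$ is normal, $\Gamma'$ is torsion-free and acts freely on $\mathbb{H}^{2}$. On the Riemann surface $\mathbb{H}^{2}/\Gamma'$ the group $\Gamma/\Gamma'\cong\mathbb{Z}_{k}^{n}$ then acts with quotient orbifold $\mathbb{H}^{2}/\Gamma=S/H$, ramified of order exactly $k$ over each of the $n+1$ cone points and unramified elsewhere. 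Hence $(\mathbb{H}^{2}/\Gamma',\Gamma/\Gamma')$ is a generalized Fermat pair of type $(k,n)$ with the same branch data as $(S,H)$, and part (1) provides a biholomorphism $S\to\mathbb{H}^{2}/\Gamma'$ carrying $H$ onto $\Gamma/\Gamma'$.

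\emph{Part (2b)} is the main obstacle. Granting (2a), $\mathrm{Aut}(S)$ is finite (as $g\ge 2$) and, via the canonical embedding $S\hookrightarrow\mathbb{P}^{g-1}$, acts linearly. Given a second generalized Fermat group $\widetilde{H}\cong\mathbb{Z}_{k}^{n}$ of type $(k,n)$ in $\mathrm{Aut}(S)$, the plan is to recover $H$ intrinsically from $S$. Arguing by induction on $n$, one singles out the cyclic \emph{rotation} subgroups $\langle a_{j,n}\rangle$ of $H$ by the properties that they have order $k$, that their number of fixed points equals the Riemann--Hurwitz value $k^{n-1}$, and that the quotient $S/\langle a_{j,n}\rangle$ with the residual abelian action is again a generalized Fermat curve, now of type $(k,n-1)$; one then shows these subgroups generate $H$, and that the same characterization applied inside $\widetilde{H}$ singles out exactly the same cyclic subgroups, whence $H=\widetilde{H}$. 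The delicate points, and where I expect the real work to lie, are: making this characterization sharp enough to exclude exotic order-$k$ subgroups of $\mathrm{Aut}(S)$ that might accidentally satisfy it, controlling the ramification by Riemann--Hurwitz throughout the induction, and disposing of the base case $n=2$ directly; this is exactly where $(k-1)(n-1)>2$ is indispensable, since for $(k,n)\in\{(2,2),(2,3),(3,2)\}$ the curve has genus $0$ or $1$ and the uniqueness is either vacuous or genuinely fails.
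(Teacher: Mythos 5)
This theorem is not proved in the paper at all: it is quoted as background and attributed to \cite{GHL09,HKLP17}, so there is no internal argument to compare yours against. On their own terms, your parts (1), (2a) and (2c) are correct and are essentially the standard arguments. For (1), the observation that an abelian Galois cover is governed by the abelianization $\Gamma_{0}/\Gamma_{0}'\cong{\mathbb Z}_{k}^{n}$ of the orbifold group, together with the fact that a surjection of finite groups of equal order is an isomorphism, forces the kernel of any admissible monodromy to be $\Gamma_{0}'$; this single computation simultaneously gives the uniqueness of the cover (hence the model $C^{k}(\lambda_{1},\ldots,\lambda_{n-2})$), the torsion-freeness of $\Gamma'$, and (2c). (For (2c) there is an even shorter route, the one the paper itself uses in Corollary \ref{teo2}: $H$ abelian gives $\Gamma'\leq K$, and $[\Gamma:K]=k^{n}=[\Gamma:\Gamma']$ forces $K=\Gamma'$.) Your case analysis in (2a) of the finite abelian subgroups of ${\rm PSL}_{2}({\mathbb C})$, split according to whether the hyperelliptic involution lies in $H$, is also sound, and the exceptional pairs $(2,2)$ and $(2,3)$ are correctly excluded by $(k-1)(n-1)>2$.

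The genuine gap is part (2b), and you flag it yourself. What you offer there is a plan, not a proof: the proposed intrinsic characterization of the cyclic subgroups $\langle a_{j,n}\rangle$ (order $k$, exactly $k^{n-1}$ fixed points, quotient again a generalized Fermat curve) is never shown to be exclusive; nothing in the sketch explains why a second generalized Fermat group $\widetilde H$ would have to share even one of these cyclic subgroups with $H$, which is the crux of the uniqueness; the induction is not set up and the base case is not treated. This uniqueness statement is precisely the main theorem of \cite{HKLP17}, and it requires substantially more work than the rest of the statement --- it cannot be recovered from the outline as written. As it stands, (2b) must be counted as unproved in your write-up; since the paper itself treats the whole theorem as a citation, the honest resolution is either to cite \cite{HKLP17} for (2b) explicitly or to carry out the missing analysis in full.
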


\begin{coro}
Let $(S,H)$ be a generalized Fermat pair of type $(k,n)$, where $(k-1)(n-1)>2$, and let $\pi:S \to \widehat{\mathbb C}$ be a Galois branched covering, with deck group $H$. Then there is a short exact sequence
$$1 \to H \to {\rm Aut}(S) \stackrel{\theta}{\to} L \to 1,$$
where (i) $L$ is the group of M\"obius transformations keeping invariant the set of branch values of $\pi$, and (ii) $\theta(\phi) \circ \pi= \pi \circ \phi$, for $\phi \in {\rm Aut}(S)$.
\end{coro}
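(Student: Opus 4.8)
The plan is to build the short exact sequence directly from the known structure of the generalized Fermat pair. By the preceding theorem, we may assume $S=C^{k}(\lambda_{1},\ldots,\lambda_{n-2})$, $H=H^{k}_{n}$, and $\pi=\pi_{n}$; then ${\mathcal B}_{\pi}=\{\infty,0,1,\lambda_{1},\ldots,\lambda_{n-2}\}$ and $H\cong{\mathbb Z}_{k}^{n}$ acts as the deck group. First I would define $\theta:{\rm Aut}(S)\to {\rm M\ddot{o}b}$ by sending $\phi\in{\rm Aut}(S)$ to the unique M\"obius transformation $\theta(\phi)$ satisfying $\theta(\phi)\circ\pi=\pi\circ\phi$. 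The key point here is that such a $\theta(\phi)$ exists and is well defined. Existence: since $(k-1)(n-1)>2$, $S$ is non-hyperelliptic and $H$ is the \emph{unique} generalized Fermat group of type $(k,n)$ in ${\rm Aut}(S)$ by part (2)(b) of the theorem; hence $H$ is characteristic (indeed normal) in ${\rm Aut}(S)$, so every $\phi\in{\rm Aut}(S)$ normalizes $H$ and therefore descends to a conformal automorphism $\overline{\phi}$ of the quotient orbifold $S/H\cong\widehat{\mathbb C}$ (with its $n+1$ cone points of order $k$), i.e.\ a M\"obius transformation; set $\theta(\phi)=\overline{\phi}$. Uniqueness of the descended map, and hence well-definedness of $\theta$, follows because $\pi$ is surjective, so $\theta(\phi)$ is determined on a dense set. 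That $\theta$ is a homomorphism is immediate from the defining relation $\theta(\phi)\circ\pi=\pi\circ\phi$.

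Next I would identify the kernel and image. The kernel of $\theta$ consists of those $\phi$ with $\pi\circ\phi=\pi$, i.e.\ the deck transformations of $\pi$, which is exactly $H$; this gives exactness at ${\rm Aut}(S)$ and at $H$ (injectivity of $H\hookrightarrow{\rm Aut}(S)$ is trivial). For the image: any $\phi\in{\rm Aut}(S)$ sends the branch locus of $\pi$ to itself (branch points are intrinsic), and $\pi$ conjugates this to $\theta(\phi)$ preserving the branch value set ${\mathcal B}_{\pi}$, so $\theta({\rm Aut}(S))\subseteq L$, where $L$ is the (finite) group of M\"obius transformations stabilizing ${\mathcal B}_{\pi}$ setwise. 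Conversely, given $m\in L$, the orbifold $\widehat{\mathbb C}$ with cone points ${\mathcal B}_{\pi}$ (all of order $k$) is $m$-invariant as an orbifold, so $m$ lifts through the characteristic cover $\pi$ (which, by part (2)(c), is realized as ${\mathbb H}^{2}/\Gamma'\to{\mathbb H}^{2}/\Gamma$ with $\Gamma'$ the commutator subgroup — a \emph{characteristic} subgroup of $\Gamma$, hence invariant under the lift of $m$ to ${\rm Aut}({\mathbb H}^{2}/\Gamma)$ wait, more precisely: $m$ lifts to a homeomorphism of ${\mathbb H}^{2}$ normalizing $\Gamma$, which then normalizes the characteristic subgroup $\Gamma'$ and so descends to $S={\mathbb H}^{2}/\Gamma'$). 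This lift is a conformal automorphism $\phi$ of $S$ with $\theta(\phi)=m$, proving surjectivity onto $L$. Finally the assertion $\theta(\phi)\circ\pi=\pi\circ\phi$ is built into the definition of $\theta$.

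The main obstacle is the surjectivity of $\theta$ onto $L$: one must show every M\"obius transformation preserving ${\mathcal B}_{\pi}$ actually lifts to $S$. The cleanest route is the Fuchsian description in part (2)(c): write $S/H={\mathbb H}^{2}/\Gamma$, so $S={\mathbb H}^{2}/\Gamma'$; an element $m\in L$, viewed as an orbifold automorphism of ${\mathbb H}^{2}/\Gamma$, lifts to a conformal automorphism $\widetilde{m}$ of ${\mathbb H}^{2}$ that normalizes $\Gamma$ inside ${\rm Aut}({\mathbb H}^{2})$; since $\Gamma'$ is characteristic in $\Gamma$, the group $\langle\Gamma,\widetilde{m}\rangle$ normalizes $\Gamma'$, and therefore $\widetilde{m}$ descends to an automorphism of ${\mathbb H}^{2}/\Gamma'=S$ inducing $m$ downstairs. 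One must be a little careful that the lift $\widetilde{m}$ exists and is holomorphic (this uses that $m$ is an automorphism of the \emph{orbifold}, i.e.\ it respects cone orders, which holds because all cone points have the same order $k$); the rest is routine diagram-chasing with the covering maps.
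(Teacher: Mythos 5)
Your proof is correct, and it is exactly the intended derivation: the paper itself offers no proof of this corollary (it is stated as a direct consequence of the quoted theorem from \cite{GHL09,HKLP17}), and your argument uses precisely the two ingredients that theorem supplies --- part (2)(b) (uniqueness of the generalized Fermat group, hence $\phi H\phi^{-1}=H$ for every $\phi\in{\rm Aut}(S)$, so every automorphism descends and $\ker\theta=H$) and part (2)(c) (the Fuchsian picture $S={\mathbb H}^{2}/\Gamma'$ with $\Gamma'$ characteristic in $\Gamma$, giving surjectivity onto $L$). One small wording slip: uniqueness of $H$ gives you directly that $H$ is \emph{normal} in ${\rm Aut}(S)$ (invariance under conjugation), which is all you need; calling it ``characteristic'' in ${\rm Aut}(S)$ is a stronger claim you neither prove nor require.
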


\begin{coro}\label{teo2}
Let $(S,H)$ be a generalized Fermat pair of type $(k,n)$.
If $R$ is a closed Riemann surface for which there is an abelian group $G<{\rm Aut}(R)$ such that $R/G$ and $S/H$ are biholomorphic as orbifolds, then there is a subgroup $J$ of $H$, acting freely on $S$, and a biholomorphism $\varphi:R \to S/J$ such that $G=\varphi^{-1} (H/J) \varphi$.
\end{coro}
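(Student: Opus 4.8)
The plan is to reduce the statement to the covering theory of the orbifold $\mathcal{O}:=S/H$. Write $\mathcal{O}=\widetilde{\mathcal{O}}/Q$, where $\widetilde{\mathcal{O}}\in\{\widehat{\mathbb C},{\mathbb C},{\mathbb H}^{2}\}$ is the universal orbifold cover and $Q=\pi_{1}^{\rm orb}(\mathcal{O})$ acts on it properly discontinuously by conformal automorphisms. Since $\mathcal{O}$ has genus zero and exactly $n+1$ cone points, each of order $k$, one has a presentation $Q=\langle x_{1},\dots,x_{n+1}\mid x_{1}^{k}=\dots=x_{n+1}^{k}=1,\ x_{1}x_{2}\cdots x_{n+1}=1\rangle$, and an elementary abelianization computation gives $Q^{\rm ab}=Q/Q'\cong{\mathbb Z}_{k}^{n}$. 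Recall also the standard dictionary: Galois branched covers of $\mathcal{O}$ correspond to normal subgroups of $Q$, the deck group being the quotient; the total space is a manifold exactly when the subgroup is torsion-free; and intermediate covers correspond to intermediate subgroups, with $\widetilde{\mathcal{O}}/N_{1}$ dominating $\widetilde{\mathcal{O}}/N_{2}$ iff $N_{1}\subseteq N_{2}$.

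First I would identify the cover $S\to\mathcal{O}$ coming from the generalized Fermat structure. It is Galois with deck group $H\cong{\mathbb Z}_{k}^{n}$, hence corresponds to a torsion-free normal subgroup $N\triangleleft Q$ with $Q/N\cong H$. As $H$ is abelian, $N\supseteq Q'$; but then $Q/N$ is a quotient of $Q/Q'\cong{\mathbb Z}_{k}^{n}$ of the same finite order $k^{n}$, which forces $N=Q'$. Thus $S=\widetilde{\mathcal{O}}/Q'$, with $H=Q/Q'$ acting as the deck group of $S\to\mathcal{O}$. (When $(k-1)(n-1)>2$ this is exactly part $(2)(c)$ of the preceding theorem; the argument just given also covers the finitely many small cases, where $\widetilde{\mathcal O}$ is ${\mathbb C}$ or $\widehat{\mathbb C}$ and $Q$ may be Euclidean or finite.)

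Next I would bring in $R$. The given orbifold biholomorphism $R/G\cong\mathcal{O}$ turns $R\to R/G$ into a Galois branched cover of $\mathcal{O}$ whose deck group is the abelian group $G$; fixing (immaterial) identifications of orbifold fundamental groups, it corresponds to a torsion-free normal subgroup $M\triangleleft Q$ with $Q/M\cong G$, and abelianness of $G$ gives $Q'\subseteq M\subseteq Q$. Put $J:=M/Q'\leq Q/Q'=H$. Then $\widetilde{\mathcal{O}}/M=\bigl(\widetilde{\mathcal{O}}/Q'\bigr)\big/\bigl(M/Q'\bigr)=S/J$, so $R$ (carrying the complex structure descended from $\widetilde{\mathcal O}$, the unique one making $R\to\mathcal{O}$ holomorphic) is biholomorphic to $S/J$ via some $\varphi$, and under $\varphi$ the deck group $G\cong Q/M$ of $R\to\mathcal{O}$ is carried onto the deck group $(Q/Q')/(M/Q')=H/J$ of $S/J\to\mathcal{O}$, i.e. $\varphi^{-1}(H/J)\varphi=G$. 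Finally, $J$ acts freely on $S=\widetilde{\mathcal O}/Q'$: if a nontrivial $\bar m\in J$ fixed a point of $S$, then for a suitable lift $m\in M$ and some $q\in Q'$ the element $q^{-1}m$ would fix a point of $\widetilde{\mathcal O}$, hence have finite order (point stabilizers of a properly discontinuous action are finite) while lying in the torsion-free group $M$, so $q^{-1}m=1$ and $\bar m=1$, a contradiction.

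The only real content is the first step: recognizing the generalized Fermat cover as the abelianization cover $\widetilde{\mathcal O}/Q'$. Everything else is the subgroup/cover dictionary, the points to verify being that $Q'\subseteq M$ (immediate from $G$ abelian), that $M$ is torsion-free (because $R$ is a manifold), and that the ambiguities in the choice of $M$ — conjugation in $Q$, and post-composition of $Q\to G$ with an automorphism of $G$ — affect none of these conclusions.
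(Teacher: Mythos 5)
Your proof is correct and follows essentially the same route as the paper: uniformize the orbifold $S/H$, identify $S$ with the cover corresponding to the commutator subgroup of the orbifold fundamental group (via the order count $|Q/N|=k^{n}=|Q/Q'|$, which is the content of part (2)(c) of the preceding theorem), and then read off $J$ as $M/Q'$ from the subgroup correspondence. The only differences are that you spell out the free action of $J$ and the conjugation identity, and that you handle the finitely many non-hyperbolic cases $(k-1)(n-1)\leq 2$ explicitly, which the paper's appeal to a Fuchsian group glosses over.
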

\begin{proof}
Let $\Gamma$ be a Fuchsian group such that $S/H={\mathbb H}^{2}/\Gamma$. Then $S={\mathbb H}^{2}/\Gamma'$ and $H=\Gamma/\Gamma'$. Inside $\Gamma$ there is a torsion free normal subgroup $\Gamma_{R} \lhd \Gamma$ such that, up to biholomorphisms,  $R={\mathbb H}^{2}/\Gamma_{R}$ and $G=\Gamma/\Gamma_{R}$. As $G$ is an abelian group, then $\Gamma' \leq \Gamma_{R}$.
In this case, $J=\Gamma_{R}/\Gamma'$.
\end{proof}

If in the above proof, we allow the normal subgroup $\Gamma_{R}$ to have torsion such that ${\mathbb H}^{2}/\Gamma_{R}$ is an orbifold with all of its cone points of order $k$, then we obtain the following.

\begin{coro}\label{teo3}
Let $(S,H)$ be a generalized Fermat pair of type $(k,n)$.
Let $R$ be a closed Riemann surface and $G<{\rm Aut}(R)$
be an abelian group such that the quotient orbifold $R/G$ has genus zero and all of its cone points have order $k$. Assume there 
a biholomorphism (between the underlying Riemann surfaces) $\varrho:R/G \to S/H$  sending 
the cone points of $R/G$ into the set of cone points of $S/H$. Then there is a subgroup $J$ of $H$ (not necessarily acting freely on $S$) and a biholomorphism (between the underlying Riemann surfaces) $\varphi:R \to S/J$ such that $G=\varphi^{-1} (H/J) \varphi$.
\end{coro}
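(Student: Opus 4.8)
The plan is to imitate the proof of Corollary \ref{teo2}, but now permitting the uniformizing group of $R$ to carry torsion of order exactly $k$ at the points lying over the cone points of $R/G$. First I would let $\Gamma$ be a Fuchsian group with $S/H = {\mathbb H}^{2}/\Gamma$; by part (2c) of the theorem of \cite{GHL09,HKLP17} we have $S = {\mathbb H}^{2}/\Gamma'$ and $H = \Gamma/\Gamma'$. The given biholomorphism $\varrho:R/G \to S/H$ of underlying Riemann surfaces, matching cone points to cone points, does not a priori preserve the orbifold structure (the cone orders on $S/H$ are all $k$, and so are those on $R/G$ by hypothesis, but $R/G$ might have \emph{fewer} cone points than $S/H$, namely on a subset of the branch set of $\pi$). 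So the correct statement is that $R/G$ is, as an orbifold, a quotient of the orbifold $S/H$ obtained by forgetting some of the cone points; equivalently, the orbifold fundamental group of $R/G$ is a Fuchsian group $\Delta$ that \emph{contains} $\Gamma$ as a finite-index subgroup? No — it is the other way: $\Delta$ is obtained from $\Gamma$ by adjoining (conjugates of) the missing elliptic generators, so $\Gamma \leq \Delta$ with $\Gamma$ of infinite index in general; but the surface $R$ still admits a branched cover to $R/G$ with deck group $G$, uniformized by a normal subgroup $\Gamma_{R} \lhd \Delta$ with $\Delta/\Gamma_{R} \cong G$, where $\Gamma_{R}$ is torsion free.

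The key step is then a group-theoretic comparison inside $\Delta$. Since $G$ is abelian, $\Delta' \leq \Gamma_{R}$, so $R = {\mathbb H}^{2}/\Gamma_{R}$ is covered by ${\mathbb H}^{2}/\Delta'$. I must relate $\Delta'$ to $\Gamma_{k}$-type subgroups: because every elliptic generator of $\Delta$ has order $k$, the quotient $\Delta/\Delta'$ is generated by $n+1$ elements (the images of the standard elliptic generators, one relation being their product) each of order dividing $k$, hence $\Delta/\Delta'$ is a quotient of ${\mathbb Z}_{k}^{n}$ — in fact, by the genus-zero, $(n{+}1)$-cone-point structure, one checks $\Delta / \langle \Delta', (\Delta)^{k}\rangle \cong {\mathbb Z}_{k}^{n}$, and ${\mathbb H}^{2}/\langle \Delta',\Delta^{k}\rangle$ is precisely the generalized Fermat curve $S$ of type $(k,n)$ with its group $H$ (this is the finite-type analogue of Theorem \ref{teouniformiza}; one can also see it directly from the algebraic model $C^{k}(\lambda_{1},\dots,\lambda_{n-2})$ and properties (1)--(7) above). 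Thus $\langle \Delta', \Delta^{k}\rangle$ is conjugate (inside $\Delta$) to $\Gamma'$ under an isomorphism of $\Delta$ with the orbifold group of $S/H$ induced by $\varrho$; after this identification we have $\Gamma' \leq \Gamma_{R} \leq \Delta$, with $\Gamma_{R}$ torsion free, so $J := \Gamma_{R}/\Gamma'$ is a subgroup of $H = \Gamma/\Gamma'$, and ${\mathbb H}^{2}/\Gamma_{R} = S/J$. The abelianity of $G = \Delta/\Gamma_{R}$ forces $J$ to be normal in $H$ with $H/J \cong G$, and since $\Gamma_{R}$ is torsion free while $\Gamma'$ already is, $J$ may fail to act freely on $S$ precisely when $\Gamma_{R}$ contains elliptic elements of $\Delta$ not in $\Gamma$ — which is exactly the phenomenon allowed in the statement. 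Tracking $\varrho$ through all identifications yields the biholomorphism $\varphi: R \to S/J$ with $G = \varphi^{-1}(H/J)\varphi$.

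The main obstacle I expect is bookkeeping with the two Fuchsian groups $\Gamma \leq \Delta$ and making rigorous the claim that $\langle \Delta',\Delta^{k}\rangle$ reconstructs the \emph{same} pair $(S,H)$ up to conjugacy — i.e. that the generalized Fermat curve of type $(k,n)$ over a genus-zero orbifold with $n+1$ cone points of order $k$ is independent, with its group, of which uniformizing presentation one starts from. This is the finite-type rigidity that Theorem \ref{teouniformiza} will establish in the infinite case; in the finite case it follows from part (1) of the cited theorem (the biholomorphism $S \to C^{k}(\lambda_{1},\dots,\lambda_{n-2})$ conjugating $H$ to $H_{n}^{k}$), together with property (4) above identifying which elements have fixed points. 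A secondary subtlety is that $\varrho$ is only assumed to be a biholomorphism of underlying Riemann surfaces sending cone points into cone points; one must verify it extends to an isomorphism of the relevant orbifold fundamental groups, which is standard once one notes that removing the cone points turns both sides into homeomorphic punctured surfaces and the orbifold structure is then dictated by the (common) cone order $k$.
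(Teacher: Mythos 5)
Your overall instinct---rerun the proof of Corollary \ref{teo2} while permitting torsion---is the right one, and it is exactly what the paper does. But the way you set it up contains a genuine error. The orbifold fundamental group $\Delta$ of $R/G$ (which may have fewer cone points than $S/H$) does \emph{not} contain $\Gamma$: if $\Gamma=\langle \delta_{1},\ldots,\delta_{n+1}:\delta_{j}^{k}=1,\ \delta_{1}\cdots\delta_{n+1}=1\rangle$ is the orbifold group of $S/H$, then $\Delta$ is a \emph{quotient} of $\Gamma$, obtained by killing the elliptic generators at those cone points of $S/H$ that are not cone points of $R/G$; there is no inclusion in either direction. Consequently $\langle \Delta',\Delta^{k}\rangle$ uniformizes a generalized Fermat curve of type $(k,m)$, where $m+1$ is the number of cone points of $R/G$, and this is \emph{not} $S$ (nor is it conjugate to $\Gamma'$) whenever the cone points of $R/G$ form a proper subset of those of $S/H$ --- which is precisely the situation the corollary is designed to cover, being the only one in which $J$ can fail to act freely. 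Your closing claim that $J$ fails to act freely ``precisely when $\Gamma_{R}$ contains elliptic elements of $\Delta$ not in $\Gamma$'' is inconsistent with your own requirement that $\Gamma_{R}\lhd\Delta$ be torsion free.

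The repair is shorter than your argument and is what the paper intends by its one-line remark before the corollary. Compose $R\to R/G$ with $\varrho$ to obtain a Galois branched covering $R\to S/H$ with deck group $G$, branched over a subset of the cone points of $S/H$, each branch value of order $k$. Its monodromy kernel in $\pi_{1}$ of the $(n+1)$-punctured sphere contains every $\delta_{j}^{k}$ (and contains $\delta_{j}$ itself exactly when the $j$-th cone point is unbranched), so the covering is classified by a normal subgroup $\Gamma_{R}\lhd\Gamma$ --- now allowed to contain elliptic elements --- with $R={\mathbb H}^{2}/\Gamma_{R}$ as Riemann surfaces and $G=\Gamma/\Gamma_{R}$. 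Abelianity of $G$ gives $\Gamma'\leq\Gamma_{R}$, and $J=\Gamma_{R}/\Gamma'\leq H$ does the job; the torsion of $\Gamma_{R}$ is exactly what produces the fixed points of $J$ on $S={\mathbb H}^{2}/\Gamma'$. No second Fuchsian group $\Delta$, and no identification of $\langle\Delta',\Delta^{k}\rangle$ with $\Gamma'$, is needed. (Your secondary remark, that $\varrho$ upgrades to an orbifold identification once cone points and the common order $k$ are matched, is fine and is implicitly used here.)
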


\begin{example}[Cyclic $p$-gonal curves]
Let $p \geq 2$ be a prime integer and let $R$ be a closed Riemann surface of genus $\gamma \geq 2$ admitting a conformal automorphism $\tau \in {\rm Aut}(R)$ such that $R/\langle \tau \rangle$ has genus zero. In this case, $R$ is called a cyclic $p$-gonal curve, $\tau$ a $p$-gonal automorphism and $\langle \tau \rangle$ a $p$-gonal group of $R$.
We may assume the cone points of $R/\langle \tau \rangle$ is given by a set of the form $\{\infty,0,1,\lambda_{1},\ldots,\lambda_{n-2}\}$, where $2\gamma=(n-1)(p-1)$.  It is known that $R$ can be described by a curve of the form
$$y^{p}=x^{m_{1}}(x-1)^{m_{2}}(x-\lambda_{1})^{m_{3}}\cdots(x-\lambda_{n-2})^{m_{n}},$$
where $m_{j} \in \{1,\ldots,p-1\}$ are such that $m_{1}+\cdots+m_{n}$ is not congruent to zero module $p$. In this model, $\tau(x,y)=(x,\omega_{p}y)$.

Theorem \ref{teo2} asserts the existence of a subgroup $J \cong {\mathbb Z}_{p}^{n-1}$ of $H_{n}^{p}$, acting freely on $C^{p}(\lambda_{1},\ldots,\lambda_{n-2})$ such that 
$R=C^{p}(\lambda_{1},\ldots,\lambda_{n-2})/J$ and $\langle \tau \rangle=H_{n}^{p}/J$.
If $p=2$, so $2\gamma=n-1$, then $J$ is the unique subgroup of $H_{n}^{2}$ which is isomorphic to  ${\mathbb Z}_{2}^{n-1}$ and acting freely on $C^{2}(\lambda_{1},\ldots,\lambda_{n-2})$. This group is given by 
$$J=\langle a_{2,n}a_{1,n}, a_{3,n}a_{1,n},\ldots, a_{n,n}a_{1,n}\rangle$$
and $R$ is the hyperelliptic Riemann surface with the equation
$$y^{2}=x(x-1)(x-\lambda_{1})\cdots(x-\lambda_{n-2}).$$

\end{example}

%%%%%%%%%%%%%%%%%
%%%%%%%%%%%%%%%%%
\section{${\mathbb Z}_{k}^{\infty}$-gonal surfaces are homeomorphic to the LNM}\label{Sec:IGFC}
Let us recall that a Riemann surface $S$ is a ${\mathbb Z}_{k}^{\infty}$-gonal surface if:
\begin{enumerate}
\item[(i)] there exists $H \cong {\mathbb Z}_{k}^{\mathbb N} < {\rm Aut}(S)$; 
\item[(ii)] there is an infinite discrete subset ${\mathcal B}$ of $\widehat{\mathbb C}$ such that, if ${\mathcal B}' \neq \emptyset$ denotes its limit set of points, then $\widehat{\mathbb C} \setminus {\mathcal B}'$ is connected;
\item[(iii)] there is a Galois branched covering map $\pi:S \to \widehat{\mathbb C} \setminus {\mathcal B}'$, with deck group $H$, and whose branch value set is ${\mathcal B}$, each one of order $k$.
\end{enumerate}

\begin{rema}\label{Obs3}
 Up to a M\"obius transformation, we may always assume $\infty,0,1 \in {\mathcal B}$.
\end{rema}

%%%%%%%%%%%%%%%%%%%%%%%
%\subsection{${\mathbb Z}_{k}^{\infty}$-gonal surfaces are homeomorphic to the LNM}

\begin{theo}\label{PropLNM}
If $k \geq 2$, then every ${\mathbb Z}_{k}^{\infty}$-gonal surface, in particular, every generalized Fermat curve of infinite type,  is homeomorphic to the LNM.
\end{theo}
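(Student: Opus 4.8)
The plan is to verify that $S$ has exactly one end and infinite genus, and then invoke the Kerékjártó–Richards classification. Since $S$ is a Riemann surface it is automatically orientable, connected, Hausdorff, and second countable, so these two invariants are all that must be checked. Throughout I would normalize, via Remark \ref{Obs3}, so that $\{\infty,0,1\}\subset{\mathcal B}$, and I would work with the Galois covering $\pi:S\to\widehat{\mathbb C}\setminus{\mathcal B}'$ with deck group $H\cong{\mathbb Z}_k^{\mathbb N}$ and branch locus ${\mathcal B}$.

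First I would establish that $S$ has infinite genus. Enumerate ${\mathcal B}=\{b_0=\infty,b_1=0,b_2=1,b_3,b_4,\ldots\}$ and, since ${\mathcal B}$ is discrete in $\widehat{\mathbb C}\setminus{\mathcal B}'$, choose for each $n$ a round disc (or Jordan domain) $D_n\subset\widehat{\mathbb C}\setminus{\mathcal B}'$ containing exactly the first $n+1$ branch values $b_0,\ldots,b_n$ and such that $\overline{D}_n\subset\widehat{\mathbb C}\setminus{\mathcal B}'$, with $D_n$ chosen so that $\widehat{\mathbb C}\setminus\overline{D}_n$ is connected. Restricting $\pi$ over $D_n$ and passing to the subgroup $H_n\cong{\mathbb Z}_k^{n}$ generated by the canonical generators associated to $b_1,\ldots,b_n$ (using that the only elements acting with fixed points project to branch values, exactly as in items (4)–(7) of Section \ref{Sec:GFC}), the corresponding intermediate quotient $S_n=\pi^{-1}(D_n)/({\rm stabilizer})$ — more precisely a connected component of $\pi^{-1}(\overline{D}_n)$, or the associated generalized-Fermat-type piece — contains a compact subsurface whose reduced genus equals the genus of the finite generalized Fermat curve $C^k(\lambda_1,\ldots,\lambda_{n-2})$, namely $1+\tfrac{k^{n-1}((k-1)(n-1)-2)}{2}$ by the Riemann–Hurwitz computation recalled in Section \ref{Sec:GFC}. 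As $n\to\infty$ this quantity tends to $\infty$, so the supremum of genera of compact subsurfaces of $S$ is infinite. The technical point to get right here is that $\pi^{-1}(\overline D_n)$ really does contain a compact subsurface biholomorphic (or at least homeomorphic) to the closed generalized Fermat curve with the $n+1$ branch points, minus small discs around the $H$-orbit of $\pi^{-1}(\partial D_n)$; one argues this by building the branched cover of $\overline D_n\cong$ (closed disc with $n+1$ marked points) with deck group ${\mathbb Z}_k^n$ directly and comparing with the construction of $C^k(\lambda_1,\ldots,\lambda_{n-2})$.

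Next I would show $S$ has exactly one end, using the Specker-type criterion in the Remark after Figure \ref{F:LMN1}: it suffices to show that for every compact $K\subset S$ there is a compact $K'\supset K$ with $S\setminus K'$ connected. Given $K$ compact, $\pi(K)$ is a compact subset of $\widehat{\mathbb C}\setminus{\mathcal B}'$; since ${\mathcal B}$ is discrete there, $\pi(K)$ meets only finitely many branch values, and since $\widehat{\mathbb C}\setminus{\mathcal B}'$ is connected (hypothesis (ii)) with ${\mathcal B}'$ a nonempty closed totally-disconnected set in the sphere, one can find a connected open $V$ with $\pi(K)\subset V$, $\overline V\subset\widehat{\mathbb C}\setminus{\mathcal B}'$, containing only finitely many points of ${\mathcal B}$, and such that $(\widehat{\mathbb C}\setminus{\mathcal B}')\setminus\overline V$ is connected — here one uses that removing a compact subset from $\widehat{\mathbb C}\setminus{\mathcal B}'$ whose complement (relative to the sphere) is connected keeps the complement connected, exploiting that ${\mathcal B}'$ is infinite (it is a limit set, hence has no isolated points, so $\widehat{\mathbb C}\setminus{\mathcal B}'$ is an infinitely-connected planar domain and "going around through ${\mathcal B}'$" is always possible). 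Set $K'=\pi^{-1}(\overline V)$, a compact subset of $S$ containing $K$. Then $S\setminus K'=\pi^{-1}\big((\widehat{\mathbb C}\setminus{\mathcal B}')\setminus\overline V\big)$; since the base $(\widehat{\mathbb C}\setminus{\mathcal B}')\setminus\overline V$ is connected, carries no branch values, and has infinitely generated fundamental group surjecting onto each finite quotient of $H$, the covering restricted there is a connected covering — i.e. the monodromy acts transitively on $H$ — so $S\setminus K'$ is connected. Establishing this transitivity is the crux.

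The main obstacle, then, is the connectedness of $S\setminus K'$: one must argue that the restriction of $\pi$ over the "outer" region $(\widehat{\mathbb C}\setminus{\mathcal B}')\setminus\overline V$ is a connected covering. Because $\pi$ is a connected Galois covering of $\widehat{\mathbb C}\setminus{\mathcal B}'$ with deck group $H$, it corresponds to a surjection $\rho:\pi_1(\widehat{\mathbb C}\setminus{\mathcal B}')\to H$, and the restriction over a subsurface $W$ is connected iff $\rho$ restricted to the image of $\pi_1(W)$ is still surjective. So I must choose $V$ carefully: I will pick $V$ to be a large closed disc-with-holes containing all the finitely many branch points of $\pi(K)$ such that its complement $W$ in $\widehat{\mathbb C}\setminus{\mathcal B}'$ still "sees" loops around every remaining branch point of ${\mathcal B}$ and loops around every component of ${\mathcal B}'$; since the canonical generators of $H\cong{\mathbb Z}_k^{\mathbb N}$ are exactly the $\rho$-images of small loops around the branch points $b_j$, and these generate $H$ topologically, keeping all but finitely many of them inside $W$ forces $\rho(\pi_1(W))$ to contain all generators, hence to be all of $H$ (the finitely many "lost" generators are recovered because $H$ being a restricted/unrestricted product of ${\mathbb Z}_k$'s, any cofinite subset of the canonical generators already generates — this uses $k\geq 2$ and the product structure). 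This requires ${\mathcal B}$ to be infinite, which is part of the definition of a ${\mathbb Z}_k^{\infty}$-gonal surface, and it is exactly where hypothesis (ii) (namely $\widehat{\mathbb C}\setminus{\mathcal B}'$ connected and ${\mathcal B}'\neq\emptyset$) is used to guarantee the one-end property rather than several ends.
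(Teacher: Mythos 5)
Your infinite-genus argument is essentially the paper's (the components of $\pi^{-1}(\overline D_n)$ are finite branched covers of a disc with $n+1$ branch points of order $k$, and Riemann--Hurwitz forces their reduced genus to blow up), so that half is fine. The one-end argument, however, has two genuine gaps. First, $K'=\pi^{-1}(\overline V)$ is \emph{not} compact: the covering $\pi$ has infinite degree (its deck group is $H\cong{\mathbb Z}_k^{\mathbb N}$), so the preimage of a compact set is a closed, non-compact set with infinitely many sheets. In the paper's decomposition, $\pi^{-1}(\overline{V_1})$ is a disjoint union of \emph{infinitely many} compact pieces $\overline{R_j}$, permuted freely by the infinite group $\langle a_{n+1},a_{n+2},\ldots\rangle$. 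So the set you feed into the Specker criterion is not admissible.

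Second, and more seriously, the crux step fails: $\pi^{-1}(W)$ is in general disconnected. If $V$ is a disc containing the branch points $b_{j_1},\ldots,b_{j_m}$, then $\rho(\pi_1(W))$ is generated by the $a_j$ with $b_j\in W$ together with $\rho(\partial V)=(a_{j_1}\cdots a_{j_m})^{\pm1}$; the boundary loop only returns the \emph{product} of the lost generators, not the generators themselves. Your claim that ``any cofinite subset of the canonical generators already generates'' is false precisely in the case that matters most: for a generalized Fermat pair of type $(k,\infty)$ the $a_j$ form an independent generating set of $\bigoplus_{\mathbb N}{\mathbb Z}_k$, so $\rho(\pi_1(W))$ has index $k^{m-1}$ and $\pi^{-1}(W)$ has $k^{m-1}$ components. (This is exactly what the paper records: $\pi^{-1}(V_2)=S_1\sqcup\cdots\sqcup S_{k^{n-1}}$.) Also, ${\mathcal B}'$ need not be infinite or perfect --- the paper's main algebraic results concern ${\mathcal B}'$ finite --- so the ``infinitely-connected planar domain'' picture you lean on is not available. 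The repair is the paper's bipartite gluing argument: take $K'$ to be the union of the \emph{finitely many} compact pieces $\overline{R_j}$ over $V_1$ that meet $K$; then $S\setminus K'$ consists of the remaining infinitely many $R_j$ together with the finitely many unbounded pieces $S_1,\ldots,S_{k^{n-1}}$, and it is connected because every $S_i$ shares a boundary loop with every $R_j$, so any two of the $S_i$ are bridged by any $R_j$ lying outside $K'$.
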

\begin{proof}
Let $(S,H)$ be a ${\mathbb Z}_{k}^{\infty}$-gonal pair, where $k \geq 2$. Let us consider an infinite discrete subset ${\mathcal B}$ of $\widehat{\mathbb C}$ such that $\widehat{\mathbb C} \setminus {\mathcal B}'$ is connected and there is 
a Galois branched covering map $\pi:S \to \widehat{\mathbb C} \setminus {\mathcal B}'$, with deck group $H$, and whose branch value set is ${\mathcal B}$.
Up to post-composing $\pi$ by a suitable M\"obius transformation, we may assume 
${\mathcal B}=\{\infty,0,1,\lambda_{1},\lambda_{2},\ldots\}.$

Let $n \geq 3$ and let $\gamma \subset \widehat{\mathbb C} \setminus \overline{\mathcal B}$
be a simple loop that bounds a topological disc $V_{1}$ containing (in its interior) the points $\infty,0,1,\lambda_{1},\ldots, \lambda_{n-3}$.  Set $V_{2}=\widehat{\mathbb C} \setminus \left(\overline{V_{1}} \cup {\mathcal B}' \right)$.

Let $R_{j}$ be a connected component of $\pi^{-1}(V_{1})$. As $V_{1}$ contains the $\pi$-image of a fixed point of each $a_{1},\ldots,a_{n}$, we observe that $R_{j}$ must contain a fixed point of each $a_{1},\ldots, a_{n}$; so its $H$-stabilizer must contain the subgroup $H_{n,1}=\langle a_{1},\ldots,a_{n}\rangle \cong {\mathbb Z}_{k}^{n}$. Moreover, the loop $\gamma$ lifts under $\pi:R_{j} \to \overline{V_{1}}$ to exactly $k^{n-1}$ pairwise disjoint loops (these being all the boundary loops of $R_{j}$), each one invariant under $\hat{a}_{n+1}=a_{1}\cdots a_{n}$. This permits us to observe that $R_{j}$ is obtained from a generalized Fermat curve $\overline{R}_{j}$ of type $(k,n)$ from which $k^{n-1}$ pairwise disjoint discs have been deleted (each of these discs contains exactly one fixed points of $\hat{a}_{n+1}$) and that its $H$-stabilizer coincides with $H_{n,1}$ (this group being the restriction of the generalized Fermat group of type $(k,n)$ of $\overline{R}_{j}$). 
These connected components $R_{j}$ are permuted by the subgroup $H_{n,2}=\langle a_{n+1},a_{n+1},\ldots\rangle$, and none of them is fixed by a non-trivial element of $H_{n,2}$, so we have a disjoint union $\pi_{\infty}^{-1}(V_{1})=\cup_{j=1}^{\infty} R_{j}$.

Let us observe that, the genus of $R_{j}$ grows to infinity as $n$ tends to infinity. This, in particular, asserts that $S$ has an infinite genus.

Now, let $S_{j}$ be a connected component of $\pi_{\infty}^{-1}(V_{2})$. As $V_{2}$ contains the $\pi$-image of a fixed point of each $a_{j}$, for $j \geq n+1$, we observe that its $H$-stabilizer is the subgroup $H_{n,2}$.  These components are transitively permuted by the finite group $H_{n,1}$. In this way, 
$\pi_{\infty}^{-1}(V_{2})$ is a disjoint union of infinite type connected surfaces $S_{1}$, \ldots, $S_{k^{n-1}}$, each one with an infinite number of boundary loops.

From the above, we have the following facts:
\begin{enumerate}
\item[(1)] each $R_{j}$ shares a boundary loop with each $S_{1},\ldots,S_{k^{n-1}}$, 
\item[(2)] $R_{j}$ and $S_{i}$ have exactly one common boundary loop, and 
\item[(3)] any two surfaces $R_{j_{1}}$ and $R_{j_{2}}$ have no boundary common loops (similarly for each pair $S_{i_{1}}$ and $S_{i_{2}}$).
\end{enumerate}

The gluing pattern of these surfaces can be described by a connected bipartite graph whose black vertices are the surfaces $R_{j}$ and the white ones are the surfaces $S_{1},\ldots, S_{k^{n-1}}$, and there exactly one edge connecting a black vertice with a white vertice. This gluing graph has exactly one end. 

Now, as $S$ has no punctures (since $\widehat{\mathbb C} \setminus {\mathcal B}'$ does not) and the ends of the quotient orbifold $S/H$ are accumulated by the branch values of $\pi_{\infty}$, by making $n$ approaching $\infty$,
the above permits us to see that $S$ has exactly one end.
\end{proof}

%%%%%%%%%%%%%%%%%%%%%%%%
%%%%%%%%%%%%%%%%%%%%%%%%%%
\section{Generalized Fermat pairs of infinite type}\label{Sec:seccion5}
Let us consider a generalized Fermat pair $(S,H)$ of type $(k,\infty)$. By the definition, there is an infinite discrete subset ${\mathcal B} \subset \widehat{\mathbb C}$, such that $\widehat{\mathbb C} \setminus {\mathcal B}'$ is connected and there is a Galois branched covering $\pi:S \to \widehat{\mathbb C} \setminus {\mathcal B}'$, with deck group $H \cong {\mathbb Z}_{k}^{\mathbb N}$, whose set of branch values (each of order $k$) is ${\mathcal B}$. As in the previous section, we may assume 
$${\mathcal B}=\{\infty,0,1,\lambda_{1},\lambda_{2},\ldots\}.$$

%%%%%%%%%%%%%%%%%%%%%%%%
\subsection{Fuchsian description of generalized Fermat pairs}
The following result provides a Fuchsian description of the generalized Fermat pairs.

\begin{theo}\label{teouniformiza}
Let $(S,H)$ be a generalized Fermat pair of type $(k,\infty)$ and let $\Gamma<{\rm PSL}_{2}({\mathbb R})$ be a Fuchsian group such that ${\mathbb H}^{2}/\Gamma=S/H$. 
If $\Gamma'$ denotes the commutator subgroup of $\Gamma$, $\Gamma^{k}$ the subgroup generated by the $k$-powers of the elements of $\Gamma$ and 
$\Gamma_{k}=\langle \Gamma', \Gamma^{k}\rangle$, then
there is a biholomorphism $\varphi:S \to {\mathbb H}^{2}/\Gamma_{k}$ such that $\varphi H \varphi^{-1}=\Gamma/\Gamma_{k}$.
\end{theo}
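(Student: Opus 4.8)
The plan is to show that $\Gamma_k$ is precisely the Fuchsian group uniformizing $S$ inside the universal cover of the orbifold ${\mathcal O}=S/H$, and that $H$ corresponds to $\Gamma/\Gamma_k$. First I would set up the orbifold universal cover: since $S/H={\mathbb H}^2/\Gamma$ as orbifolds, with all cone points of order $k$ lying over ${\mathcal B}$, the map $\pi:S\to\widehat{\mathbb C}\setminus{\mathcal B}'$ lifts to a holomorphic covering $q:{\mathbb H}^2\to S$ whose deck group is a torsion-free normal subgroup $N\lhd\Gamma$ with $\Gamma/N\cong H\cong{\mathbb Z}_k^{\mathbb N}$. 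So $S={\mathbb H}^2/N$ and $H=\Gamma/N$, and the whole statement reduces to proving $N=\Gamma_k$.

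Next I would prove $\Gamma_k\leq N$. Since $\Gamma/N$ is abelian, $\Gamma'\leq N$. Since $\Gamma/N$ has exponent $k$, every $k$-th power of an element of $\Gamma$ lies in $N$, so $\Gamma^k\leq N$; hence $\Gamma_k=\langle\Gamma',\Gamma^k\rangle\leq N$. This gives a surjection $\Gamma/\Gamma_k\twoheadrightarrow\Gamma/N\cong H$, and $\Gamma/\Gamma_k$ is the maximal abelian quotient of exponent $k$ of $\Gamma$. For the reverse inclusion $N\leq\Gamma_k$, I would invoke the maximality condition (iv) in the definition of a generalized Fermat pair of type $(k,\infty)$. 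The quotient $R:={\mathbb H}^2/\Gamma_k$ carries the abelian group $G:=\Gamma/\Gamma_k$ of order-$k$-exponent automorphisms, and the natural map $P:R\to{\mathbb H}^2/\Gamma={\mathcal O}$ is a Galois branched covering with deck group $G$; I must check its branch locus is exactly ${\mathcal B}$ with each branch value of order $k$. This is where the concrete structure of $\Gamma_k$ enters: each elliptic generator $\gamma$ of $\Gamma$ (of order $k$, one conjugacy class per point of ${\mathcal B}$) has image of order exactly $k$ in $\Gamma/\Gamma_k$ — this uses that $\Gamma$, being the fundamental group of a genus-zero orbifold with cone points of order $k$ and (because ${\mathcal B}'\neq\emptyset$) a free part, surjects onto ${\mathbb Z}_k$ carrying $\gamma$ to a generator — so $\Gamma_k$ is torsion-free, $R$ is a genuine Riemann surface, and $P$ branches of order $k$ over every point of ${\mathcal B}$ and nowhere else. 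Then (iv), applied to $(R,G)$, yields a subgroup $J<H$ acting freely on $S$ with $R=S/J$ and $G=H/J$; pulling back the covers, $J$ corresponds to $N/\Gamma_k$. Applying (iv) again in the other direction (or using a cardinality/structure argument that $H$ has no proper subgroup $J$ with $H/J$ and $H$ both $\cong{\mathbb Z}_k^{\mathbb N}$ forced by the same branch data — this is where Theorem~\ref{teouniformiza}'s own consequence about uniqueness is being bootstrapped), I get $J$ trivial, i.e. $N=\Gamma_k$. Finally, $\varphi$ is the induced biholomorphism $S={\mathbb H}^2/N={\mathbb H}^2/\Gamma_k$, and it conjugates $H=\Gamma/N$ to $\Gamma/\Gamma_k$ by construction.

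The main obstacle is the reverse inclusion $N\leq\Gamma_k$, equivalently showing that the maximality condition (iv) forces $J$ to be trivial. The subtlety is that, unlike the finite-type case (Corollary~\ref{teo2}), $H\cong{\mathbb Z}_k^{\mathbb N}$ has many proper subgroups $J$ with $H/J\cong{\mathbb Z}_k^{\mathbb N}$, so triviality of $J$ cannot come from abstract group theory alone: it must come from the requirement that $R=S/J$ \emph{still} admit a Galois branched covering to $\widehat{\mathbb C}\setminus{\mathcal B}'$ with deck group $H/J$ and branch locus all of ${\mathcal B}$ of order $k$ — if $J$ acted nontrivially with $J\cap(\text{elliptic stabilizers})\neq 1$ it would kill some cone points, and if it acted freely but nontrivially the resulting cover would have strictly fewer automorphisms over a neighborhood of a cone point than $\Gamma/\Gamma_k$ does, contradicting that $\Gamma/\Gamma_k$ is the \emph{maximal} such quotient. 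Making this local-to-global comparison precise — that $\Gamma/\Gamma_k$ realizes the largest possible $H$ for the given branch data, which is exactly what (iv) demands of a generalized Fermat pair — is the crux, and I expect it to proceed by comparing the two quotient maps $S\to{\mathcal O}$ and $R\to{\mathcal O}$ fiber by fiber over ${\mathcal B}$ and over ${\mathcal B}'$-neighborhoods, using that $\Gamma_k$ is characteristic so the construction is canonical.
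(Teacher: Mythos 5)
Your proposal follows the same route as the paper: uniformize $S={\mathbb H}^{2}/N$ with $N\lhd\Gamma$ torsion free and $H=\Gamma/N$, deduce $\Gamma_{k}\leq N$ from the fact that $\Gamma/N\cong{\mathbb Z}_{k}^{\mathbb N}$ is abelian of exponent $k$, check that $({\mathbb H}^{2}/\Gamma_{k},\Gamma/\Gamma_{k})$ satisfies the hypotheses of the maximality condition (iv), and then invoke (iv). Your extra care in verifying that the elliptic generators survive in $\Gamma/\Gamma_{k}$ (so that $\Gamma_{k}$ is torsion free and the branch data is exactly ${\mathcal B}$, each point of order $k$) is a worthwhile addition, since the paper only asserts this. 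However, the ``crux'' you flag at the end is not actually there, and your attempts to resolve it go astray. A single application of (iv) to $(R,G)=({\mathbb H}^{2}/\Gamma_{k},\Gamma/\Gamma_{k})$ produces $J<H$ with $S/J\cong R$ compatibly with the two coverings of $\widehat{\mathbb C}\setminus{\mathcal B}'$; writing $J=\widetilde J/N$ with $N\leq\widetilde J\leq\Gamma$, this says ${\mathbb H}^{2}/\widetilde J\cong{\mathbb H}^{2}/\Gamma_{k}$ over the base orbifold, so $\widetilde J$ and $\Gamma_{k}$ are conjugate by an element normalizing $\Gamma$, and since both are normal in $\Gamma$ (indeed $\Gamma_{k}$ is characteristic) this forces $\widetilde J=\Gamma_{k}$. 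Hence $N\leq\widetilde J=\Gamma_{k}\leq N$, which simultaneously gives $N=\Gamma_{k}$ and $J=1$: there is no need for a ``second application of (iv) in the other direction'' (which does not make sense, since (iv) is a hypothesis on the pair $(S,H)$, not on the candidate pair), nor for any bootstrapped uniqueness statement, nor for the fiberwise comparison over ${\mathcal B}$ that you sketch. Note also the direction slip in your write-up: $J$ cannot ``correspond to $N/\Gamma_{k}$'' (that is a subgroup of $\Gamma/\Gamma_{k}$, not of $H=\Gamma/N$); it is $\widetilde J/N=\Gamma_{k}/N$, trivial precisely because of the sandwich above. Your correct observation that $H$ has many proper subgroups $J$ with $H/J\cong H$ explains why (iv) must be imposed as an axiom in the infinite-type setting, but it plays no role in closing this proof.
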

\begin{proof}
Let us observe that $\Gamma_{k}$ is torsion-free. In this way, $S_{\Gamma}={\mathbb H}^{2}/\Gamma_{k}$ is a Riemann surface admitting $H_{\Gamma}=\Gamma/\Gamma_{k}\cong {\mathbb Z}_{k}^{\mathbb N}$ as a group of conformal automorphisms. Moreover, the inclusion $\Gamma_{k} \lhd \Gamma$ asserts that there is a Galois branched covering $\pi_{\Gamma}:S_{\Gamma} \to \widehat{\mathbb C} \setminus {\mathcal B}'$, with deck group $H_{\Gamma}$, and whose branch values are given by the points in ${\mathcal B}$, each one with branch order $k$.

By covering space theory, there is a torsion-free and normal subgroup $K \lhd \Gamma$ such that $S={\mathbb H}^{2}/K$ and $H=\Gamma/K$.
As $H$ is abelian, $\Gamma' \lhd K$ and, as every non-trivial element of $H$ has order $k$,  that $\Gamma^{k} \lhd K$. All of this asserts that $\Gamma_{k} \leq K$. 
The maximality condition (iv), of our definition, now asserts that $K \leq \Gamma_{k}$.
\end{proof}

\begin{example}\label{ejemplo2}
(1) If $\#{\mathcal B}'=1$, then $\Gamma_{k}=\Gamma'$. 
(2) If $\#{\mathcal B}'=2$, then 
$\Gamma=\langle \alpha,\delta_{1}, \delta_{2},\ldots: \delta_{j}^{k}=1, \; j \geq 1 \rangle$
and
$\Gamma_{k}=\langle \Gamma', \gamma \alpha^{k} \gamma^{-1}: \gamma \in \Gamma\rangle.$
\end{example}

The following asserts, in particular, that every ${\mathbb Z}_{k}^{\infty}$-gonal pair is a quotient of a generalized Fermat pair of type $(k,\infty)$.

\begin{coro}[A universal property of generalized Fermat pairs]\label{universal}
Let $(S,H)$ be a generalized Fermat pair of type $(k,\infty)$ and, as above, let ${\mathcal B} \subset \widehat{\mathbb C}$ be the corresponding branch locus of a Galois branched covering $\pi:S \to \widehat{\mathbb C} \setminus {\mathcal B}'$, with deck group $H \cong {\mathbb Z}_{k}^{\mathbb N}$.
Let $R$ be a Riemann surface and $G<{\rm Aut}(R)$ be an abelian group such that there is a Galois branched covering map $P:R \to \widehat{\mathbb C} \setminus {\mathcal B}'$, with deck group $G$, whose branch value set is ${\mathcal B}$ and each branch value of order $k$. If the finite order elements of $G$ have orders a divisor of $k$,  then there is a normal subgroup $J \lhd H$ (which might contain elements acting with fixed points on $S$) such that, up to biholomorphisms, $R=S/J$ and $G=H/J$.
\end{coro}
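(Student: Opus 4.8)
The plan is to imitate the proof of Corollary~\ref{teo2} (and of its orbifold variant Corollary~\ref{teo3}), with the commutator subgroup replaced by $\Gamma_{k}$ and with Theorem~\ref{teouniformiza} playing the role of the finite‑type structure theorem. First I would fix a Fuchsian group $\Gamma<\mathrm{PSL}_{2}(\mathbb{R})$ with $\mathbb{H}^{2}/\Gamma=S/H$ as Riemann orbifolds; by hypothesis its cone points are exactly the points of ${\mathcal B}$, all of order $k$. Theorem~\ref{teouniformiza} then lets me assume, after a biholomorphism, that $S=\mathbb{H}^{2}/\Gamma_{k}$ and $H=\Gamma/\Gamma_{k}$, where $\Gamma_{k}=\langle\Gamma',\Gamma^{k}\rangle$ is torsion‑free.

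Next I would realize $R$ as a quotient of $\mathbb{H}^{2}$ by a subgroup of $\Gamma$. Since $P\colon R\to\widehat{\mathbb{C}}\setminus{\mathcal B}'$ is a Galois branched covering with deck group $G$ and branch locus ${\mathcal B}$, each branch value of order $k$, the stabilizer in $G$ of a point over $b\in{\mathcal B}$ is cyclic of order $k$; hence the quotient orbifold $R/G$ has underlying surface $\widehat{\mathbb{C}}\setminus{\mathcal B}'$ and cone points exactly ${\mathcal B}$, all of order $k$. Thus $R/G$ is, via the identity map on $\widehat{\mathbb{C}}\setminus{\mathcal B}'$, the same Riemann orbifold $\mathbb{H}^{2}/\Gamma$ as $S/H$. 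As $R$ is a smooth Riemann surface and $R\to R/G$ is a Galois covering with group $G$, the orbifold universal cover $\mathbb{H}^{2}\to\mathbb{H}^{2}/\Gamma$ factors through $R$, producing a torsion‑free normal subgroup $K\lhd\Gamma$ and a biholomorphism $R\cong\mathbb{H}^{2}/K$ carrying $G$ onto $\Gamma/K$.

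It then remains to prove the inclusion $\Gamma_{k}\leq K$; granting it, $J:=K/\Gamma_{k}$ is a normal subgroup of $H=\Gamma/\Gamma_{k}$, and $S/J=\mathbb{H}^{2}/K\cong R$ with $H/J=\Gamma/K\cong G$ compatibly, which is exactly the assertion. Here $\Gamma'\leq K$ is automatic because $G=\Gamma/K$ is abelian, and for an elliptic $\gamma\in\Gamma$ one has $\gamma^{k}=1$ since every cone point of $\mathbb{H}^{2}/\Gamma$ has order $k$; so what really must be checked is that $\gamma^{k}\in K$ for the hyperbolic and parabolic elements as well, equivalently that $G$ has exponent dividing $k$. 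This is the hard part. The elliptic case and the hypothesis that the finite‑order elements of $G$ have order dividing $k$ control the torsion of $G$, but one still has to rule out elements of infinite order in $G$; I would try to do this by combining the ramification data of $P$ (the meridians of the points of ${\mathcal B}$ map to order‑$k$ elements of $G$) with the structure of $\Gamma$ — e.g. that, modulo $\Gamma'$ and the normal subgroup generated by the elliptic elements, $\Gamma$ is governed by $\pi_{1}(\widehat{\mathbb{C}}\setminus{\mathcal B}')$ — so that the images in $G$ of the remaining generators are forced to have finite order, hence order dividing $k$. Everything else (the Fuchsian reduction, the covering‑space bookkeeping, and the final translation back through $\Gamma/\Gamma_{k}$) is routine once Theorem~\ref{teouniformiza} is in hand.
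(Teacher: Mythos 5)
Your reduction is exactly the paper's: uniformize $S/H$ by a Fuchsian group $\Gamma$, use Theorem \ref{teouniformiza} to write $S={\mathbb H}^{2}/\Gamma_{k}$ and $H=\Gamma/\Gamma_{k}$, realize $R={\mathbb H}^{2}/F$ for a torsion-free normal subgroup $F\lhd\Gamma$ with $G=\Gamma/F$, and reduce everything to the inclusion $\Gamma_{k}\leq F$, with $\Gamma'\leq F$ coming from abelianness of $G$. The one step you flag as ``the hard part'' --- showing $\Gamma^{k}\leq F$, i.e.\ that $G$ has exponent dividing $k$ --- is the step the paper dispatches in a single clause: ``as every finite order element of $G$ to the power $k$ gives the identity, then $\Gamma^{k}\lhd F$.'' In other words, the paper reads the hypothesis as directly giving exponent $k$ and makes no attempt at the argument you sketch.

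Your instinct that infinite-order elements of $G$ must be dealt with is correct, but the program you outline (using the ramification data to force the images of the remaining generators of $\Gamma$ to have finite order) cannot succeed under the hypothesis as literally stated. Indeed, if $\#{\mathcal B}'\geq 2$, then $\Gamma=\langle\alpha,\delta_{1},\delta_{2},\ldots:\delta_{j}^{k}=1\rangle$ surjects onto $G={\mathbb Z}\times{\mathbb Z}_{k}^{\mathbb N}$ via $\alpha\mapsto(1,0)$ and $\delta_{j}\mapsto(0,e_{j})$; the kernel $F$ is torsion free (every torsion element of $\Gamma$ is conjugate to a power of some $\delta_{j}$ and so has nontrivial image), the covering ${\mathbb H}^{2}/F\to\widehat{\mathbb C}\setminus{\mathcal B}'$ is Galois with deck group $G$, branch locus ${\mathcal B}$ and every branch value of order $k$, and every finite-order element of $G$ has order dividing $k$ --- yet $\alpha^{k}\notin F$, so $\Gamma^{k}\not\leq F$ and the conclusion fails. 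So the gap in your write-up is genuine and cannot be filled by the route you propose; the only repair is to read (or strengthen) the hypothesis so that \emph{every} element of $G$ has order dividing $k$, in which case $\gamma^{k}\in F$ for all $\gamma\in\Gamma$ is immediate and the rest of your argument goes through verbatim.
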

\begin{proof}
Let $\Gamma$ be a Fuchsian group such that $S={\mathbb H}^{2}/\Gamma_{k}$ and $H=\Gamma/\Gamma_{k}$.
By the uniformization theorem, there is a torsion-free normal subgroup $F \lhd \Gamma$ such that $R={\mathbb H}^{2}/F$ and $G=\Gamma/F$. As $G$ is abelian, $\Gamma' \lhd F$ and, as every finite order element of $G$ to the power $k$ gives the identity, then $\Gamma^{k} \lhd F$. All of the above asserts that $\Gamma_{k} \lhd F$.
\end{proof}

\begin{question}
Is the generalized Fermat group of type $(k,\infty)$ unique?
\end{question}

%%%%%%%%%%%%
\subsection{An inverse limit construction}
As it was for the finite type situation, we may wonder if there is a way to describe algebraically the pair $(S,H)$ in terms of ${\mathcal B}$.

A first attempt is to consider inverse limits \cite{Spaniel} of the finite type $(k,n)$ generalized Fermat curves $C^{k}(\lambda_{1},\ldots,\lambda_{n-2})$, for $n \geq 3$, and using, for $m \geq n$, the Galois branched coverings 
$$f_{mn}:C^{k}(\lambda_{1},\ldots,\lambda_{m-2}) \to C^{k}(\lambda_{1},\ldots,\lambda_{n-2}): [x_{1}:\cdots: x_{m+1}] \mapsto [x_{1}:\cdots: x_{n+1}].$$ 

Unfortunately, the corresponding inverse limit is not a Riemann surface; the points over which this inverse limit fails to have a Riemann surface structure are those living over the set ${\mathcal B}'$. So, we need to delete these points. 

Below, we proceed to construct a certain type of ``algebraic" model in ${\mathbb P}^{\mathbb N}$ defining Riemann surfaces $C^{k}_{\infty}({\mathcal B},M)$ (homeomorphic to the LNM), where $M \subset {\mathcal B}'$ is finite (which it might or not be empty),  admitting a group of conformal automorphisms $H^{k}_{\infty} \cong {\mathbb Z}_{k}^{\mathbb N}$, such that there is a Galois branched covering $\pi_{\infty}:C^{k}_{\infty}({\mathcal B},M) \to \widehat{\mathbb C} \setminus {\mathcal B}'$ with deck group $H^{k}_{\infty}$ and whose branch values (each one of order $k$) are given by the points in ${\mathcal B}$.

\subsubsection{}
Let us fix a finite subset $M \subset {\mathcal B}'$ (which might or not be empty). If $M \neq \emptyset$, we set $M=\{p_{1},\ldots,p_{N}\}$.

\subsubsection{}
For each $n \geq 3$, we consider the generalized Fermat curve of type $(k,N+n)$
$$C^{k}(p_{1},\ldots,p_{N},\lambda_{1},\ldots,\lambda_{n-2}),$$  its corresponding generalized Fermat group $H^{k}_{n+N} \cong {\mathbb Z}_{k}^{n+M}$, and the Galois branched covering
$$\pi_{n}:C^{k}(p_{1},\ldots,p_{N},\lambda_{1},\ldots,\lambda_{n-2}) \to \widehat{\mathbb C}: [x_{1}:\cdots:x_{n+N+1}] \mapsto -\left(\frac{x_{2}}{x_{1}}\right)^{k},$$
whose deck group is $H^{k}_{n+N}$ and branch values set is  
$$\{\infty,0,1,p_{1},\ldots,p_{N},\lambda_{1},\ldots,\lambda_{n-2}\}.$$

\subsubsection{}
We set 
$$C_{n}(M):=C^{k}(p_{1},\ldots,p_{N},\lambda_{1},\ldots,\lambda_{n-2}) \setminus \pi_{n}^{-1}({\mathcal B}') \subset {\mathbb P}^{n},$$
which is a connected (and non-compact) Riemann surface (which is analytically finite if ${\mathcal B}'$ is finite).
If $M=\emptyset$, then $C_{n}(\emptyset):=C^{k}(\lambda_{1},\ldots,\lambda_{n-2}) \setminus \pi_{n}^{-1}({\mathcal B}') \subset {\mathbb P}^{n}.$

Note that $H^{k}_{n+N}<{\rm Aut}(C_{n}(M))$ is the deck group of the 
Galois branched covering
$$\pi_{n}:C_{n}(M) \to \widehat{\mathbb C}\setminus{\mathcal B}': [x_{1}:\cdots:x_{n+N+1}] \mapsto -\left(\frac{x_{2}}{x_{1}}\right)^{k},$$
whose branch values set is
$$\{\infty,0,1,\lambda_{1},\ldots,\lambda_{n-2}\}.$$

\subsubsection{}
Now, for $m \geq n$, we let
$$f_{mn}:C_{m}(M) \to C_{n}(M): [x_{1}:\cdots: x_{m+N+1}] \mapsto [x_{1}:\cdots: x_{n+N+1}],$$
which is a Galois branched covering whose deck group is given by the subgroup of $H^{k}_{m+N}$ given by $\langle a_{n+N+2,m+N},\ldots,a_{m+N+1,m+N}\rangle \cong {\mathbb Z}_{k}^{m-n}$.
Note that $\pi_{n} \circ f_{mn}=\pi_{m}$.

\subsubsection{}
With the above data, we may consider the inverse limit
$$C^{k}_{\infty}({\mathcal B};M)=\underleftarrow{\lim}(C_{n}(M),f_{mn})=$$
$$=\{p=(p_{n}) \in \prod_{n\geq 3} C_{n}(M): f_{mn}(p_{m})=p_{n}, \; \forall m \geq n \geq 3\},$$
together the map
$$\pi_{\infty}:C^{k}_{\infty}({\mathcal B};M) \to \widehat{\mathbb C}\setminus{\mathcal B}':(p_{n}) \mapsto \pi_{3}(p_{3}).$$

The above inverse limit can be algebraically described as 
$$
C^{k}_{\infty}({\mathcal B};M):=\left\{
\begin{array}{rcl}
x_{1}^{k}+x_{2}^{k}+x_{3}^{k}&=&0\\
p_{1}x_{1}^{k}+x_{2}^{k}+x_{4}^{k}&=&0\\
&\vdots&\\
p_{N}x_{1}^{k}+x_{2}^{k}+x_{3+N}^{k}&=&0\\
\lambda_{1}x_{1}^{k}+x_{2}^{k}+x_{4+N}^{k}&=&0\\
\lambda_{2}x_{1}^{k}+x_{2}^{k}+x_{5+N}^{k}&=&0\\
&\vdots&\\
-\left(\dfrac{x_{2}}{x_{1}}\right)^{k} &\notin &{\mathcal B}'.
\end{array}
\right\} \subset {\mathbb P}^{\mathbb N},
$$
and 
$$\pi_{\infty}:C^{k}_{\infty}({\mathcal B};M) \to \widehat{\mathbb C} \setminus {\mathcal B}': [x_{1}:x_{2}:\cdots] \mapsto -\left(\dfrac{x_{2}}{x_{1}}\right)^{k}.$$

%%%%%%%%%%%%%%%%%%%
\subsubsection{}\label{Sec:FP}
The subset $C^{k}_{\infty}({\mathcal B};M)$ is invariant under each of the maps $a_{j}$ (as described in Section \ref{Sec:projective}). So $C^{k}_{\infty}({\mathcal B};M)$ admits  
$$H_{\infty}^{k}=\langle a_{1},a_{2},\ldots \rangle \cong {\mathbb Z}_{k}^{\mathbb N}$$
as a group of bijections and 
$$ \pi_{\infty} \circ a_{j}=\pi_{\infty}, \quad j=1,2,\ldots$$

As the form of the elements $a_{j}$ are diagonals, it can be seen the following ($\rho_{k}:=e^{\pi i/k}$).
\begin{enumerate}
\item ${\rm Fix}(a_{1})=\{x_{1}=0\} \cap C^{k}_{\infty}({\mathcal B};M)=$
$$=\{[0:1:\rho_{k}^{l_{3}}:\rho_{k}^{l_{4}}:\cdots]: l_{j} \in \{1,\ldots,k-1\}\}.$$
\item ${\rm Fix}(a_{2})=\{x_{2}=0\} \cap C^{k}_{\infty}({\mathcal B};M)=$
$$=\{[1:0:\rho_{k}^{l_{3}}: \rho_{k}^{l_{3}}p_{1}^{1/k}:\cdots: \rho_{k}^{l_{3+N}}p_{N}^{1/k} :\rho_{k}^{l_{4+N}}\lambda_{1}^{1/k}:\cdots]: l_{j} \in \{1,\ldots,k-1\}\}.$$

\item ${\rm Fix}(a_{3})=\{x_{3}=0\} \cap C^{k}_{\infty}({\mathcal B};M)=$
$$=\{[1:\rho_{k}^{l_{2}}:0: \rho_{k}^{l_{4}}(1+p_{1})^{1/k}:\cdots: \rho_{k}^{l_{3+N}}(1+p_{N})^{1/k} :\rho_{k}^{l_{4+N}}(1+\lambda_{1})^{1/k}:$$
$$\cdots]: l_{j} \in \{1,\ldots,k-1\}\}.$$

\item ${\rm Fix}(a_{4})=\cdots={\rm Fix}(a_{N+3})=\emptyset$.

\item If $j \geq 4$, ${\rm Fix}(a_{N+j})=\{x_{N+j}=0\} \cap C^{k}_{\infty}({\mathcal B};M)=$
$$=\{[1:\rho_{k}^{l_{2}}\lambda_{j}^{1/k}:\omega_{k}^{l_{3}}(\lambda_{j}-1)^{1/k}: \omega_{k}^{l_{4}}(\lambda_{j}-p_{1})^{1/k}:\cdots: \omega_{k}^{l_{3+N}}(\lambda_{j}-p_{N})^{1/k}:$$
$$\omega_{k}^{l_{4+N}}(\lambda_{j}-\lambda_{1})^{1/k}:\cdots:\omega_{k}^{l_{2+j+N}}(\lambda_{j}-\lambda_{j-1})^{1/k}:0:\omega_{k}^{l_{4+j+N}}(\lambda_{j}-\lambda_{j+1})^{1/k}:$$
$$\cdots]: l_{j} \in \{1,\ldots,k-1\}\}.$$

\item If $b \in H_{\infty}^{k} \setminus \{a_{1}^{l_{1}}, a_{2}^{l_{2}}, a_{3}^{l_{3}}, a_{N+4}^{l_{4}},a_{N+5}^{l_{5}},\ldots: 0 \leq l_{j} \leq k-1\}$, then ${\rm Fix}(b)=\emptyset$.

\end{enumerate}

%%%%%%%%%%%%%%%%%%%
\subsubsection{\bf $C^{k}_{\infty}({\mathcal B};M)$ are ${\mathbb Z}_{k}^{\infty}$-gonal surfaces}

\begin{theo}\label{main0}
Within the above notations, the following hold.
\begin{enumerate}
\item $C^{k}_{\infty}({\mathcal B};M)$ is a connected Riemann surface, homeomorphic to the LNM.

\item ${\mathbb Z}_{k}^{\mathbb N} \cong H_{\infty}^{k}<{\rm Aut}(C^{k}_{\infty}({\mathcal B};M))$.

\item The map $\pi_{\infty}:C^{k}_{\infty}({\mathcal B},M) \to \widehat{\mathbb C} \setminus {\mathcal B}'$ is a Galois branched covering with deck group $H_{\infty}^{k}$, whose branch locus set is ${\mathcal B}$.

\item The only elements of $H_{\infty}^{k}$ acting with fixed points on $C^{k}_{\infty}({\mathcal B};M)$ are the powers of the elements $a_{j}$, where $j=1,2,3,4+N,5+N, \ldots$.

\end{enumerate}
\end{theo}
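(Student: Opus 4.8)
The plan is to prove the four assertions by reducing everything to the finite-type generalized Fermat curves $C^{k}(p_{1},\ldots,p_{N},\lambda_{1},\ldots,\lambda_{n-2})$ and their known properties (recalled in Section \ref{Sec:GFC}), together with the behaviour of inverse limits. The central technical point is to verify that the inverse limit $C^{k}_{\infty}({\mathcal B};M)=\underleftarrow{\lim}(C_{n}(M),f_{mn})$, with the points over ${\mathcal B}'$ removed, carries a natural Riemann surface structure; once this is in place, assertions (2)--(4) follow by transporting the corresponding finite-level facts through the bonding maps. First I would observe that each $f_{mn}:C_{m}(M)\to C_{n}(M)$ is a finite Galois (unbranched, after deleting $\pi_{n}^{-1}({\mathcal B}')$) covering, so the inverse system consists of honest covering maps of Riemann surfaces; a pro-finite-covering inverse limit is locally modelled on the inverse limit of the deck groups, and over any point $q\in\widehat{\mathbb C}\setminus{\mathcal B}'$ the fibre of $\pi_{\infty}$ is a principal homogeneous space under $\varprojlim H^{k}_{n+N}\cong{\mathbb Z}_{k}^{\mathbb N}$. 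Around a regular value one lifts a small disc; around a branch value $b\in{\mathcal B}$ one uses the local model $w\mapsto w^{k}$ at every finite stage (compatibly, since all branching has order $k$), and around an accumulation point of ${\mathcal B}$ — which is exactly where the limit degenerates — one has deleted the fibre, so no chart is needed there. This gives a Riemann surface structure on $C^{k}_{\infty}({\mathcal B};M)$ for which $\pi_{\infty}$ is holomorphic, and the algebraic description in ${\mathbb P}^{\mathbb N}$ simply records the coordinatewise equations, so $C^{k}_{\infty}({\mathcal B};M)$ is identified with the stated zero locus; this proves the Riemann surface part of (1).

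Next I would establish (2) and (3) simultaneously. The maps $a_{j}$ are diagonal and preserve each defining equation as well as the condition $-(x_{2}/x_{1})^{k}\notin{\mathcal B}'$, so $H^{k}_{\infty}=\langle a_{1},a_{2},\ldots\rangle$ acts by biholomorphisms; that it is isomorphic to ${\mathbb Z}_{k}^{\mathbb N}$ follows because its image in each finite quotient $C_{n}(M)$ is $H^{k}_{n+N}/\langle\text{higher }a_{j}\rangle\cong{\mathbb Z}_{k}^{n+N}$ and these fit together to give $\varprojlim{\mathbb Z}_{k}^{n+N}\cong{\mathbb Z}_{k}^{\mathbb N}$ (using $a_{1}a_{2}\cdots=1$ at each level). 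Since $\pi_{\infty}=\pi_{3}\circ\mathrm{pr}_{3}$ and $\pi_{n}\circ f_{mn}=\pi_{m}$, the group $H^{k}_{\infty}$ acts transitively on each fibre (it does so at every finite level and the inverse limit of transitive actions on finite sets is transitive) and the quotient is $\widehat{\mathbb C}\setminus{\mathcal B}'$; so $\pi_{\infty}$ is Galois with deck group $H^{k}_{\infty}$. The branch locus is ${\mathcal B}$ because each finite-level covering $\pi_{n}:C_{n}(M)\to\widehat{\mathbb C}\setminus{\mathcal B}'$ branches exactly over $\{\infty,0,1,\lambda_{1},\ldots,\lambda_{n-2}\}$ with order $k$, and letting $n\to\infty$ exhausts all of ${\mathcal B}$; a point of ${\mathcal B}'$ is not in the base, and no point maps with ramification to something outside ${\mathcal B}$.

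For (4), I would push the finite-type statement (item (4) of the list in Section \ref{Sec:GFC}, i.e.\ that in $C^{k}(p_{1},\ldots,p_{N},\lambda_{1},\ldots,\lambda_{n-2})$ only the powers of the coordinate automorphisms $a_{i,n+N}$ have fixed points) through the bonding maps: an element $b\in H^{k}_{\infty}$ has a fixed point on $C^{k}_{\infty}({\mathcal B};M)$ iff its image in every $C_{n}(M)$ has a fixed point, hence iff at every level $b$ projects into the set of powers of the relevant coordinate automorphisms. Tracking which coordinate automorphism survives in the limit, and using the explicit fixed-point computations in Section \ref{Sec:FP} (in particular that $\mathrm{Fix}(a_{4})=\cdots=\mathrm{Fix}(a_{N+3})=\emptyset$ because $p_{1},\ldots,p_{N}\in{\mathcal B}'$ so the corresponding would-be fixed points have been deleted), one is left exactly with the powers of $a_{j}$ for $j\in\{1,2,3,4+N,5+N,\ldots\}$, which is the claim.

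Finally, for the homeomorphism-type part of (1): having identified $(C^{k}_{\infty}({\mathcal B};M),H^{k}_{\infty})$ as a ${\mathbb Z}_{k}^{\infty}$-gonal pair (conditions (i)--(iii) hold by (2) and (3)), Theorem \ref{PropLNM} immediately gives that it is homeomorphic to the LNM, so this requires no separate argument.

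\textbf{Main obstacle.} I expect the delicate step to be the first one — rigorously putting a Riemann surface structure on the inverse limit after deleting the fibres over ${\mathcal B}'$, i.e.\ checking that, away from those deleted fibres, the inverse limit is genuinely a (second countable, Hausdorff) one-dimensional complex manifold and not something pathological. The two things that make this work and must be handled with care are: (a) at a branch value the local charts $w\mapsto w^{k}$ at successive levels are genuinely compatible (so the inverse limit is smooth there, inheriting a single uniformizing coordinate), and (b) the deletion of $\pi_{\infty}^{-1}({\mathcal B}')$ removes precisely the bad points, so that the remaining space is open in the compact inverse limit and inherits a nice topology; once these are settled, the rest is bookkeeping over the finite stages.
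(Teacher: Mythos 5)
Your reductions are the same as the paper's: once the Riemann surface structure is in place, (2) and (3) come from the diagonal form of the $a_{j}$'s and the identity $\pi_{\infty}\circ a_{j}=\pi_{\infty}$, (4) from the explicit fixed-point lists of Section \ref{Sec:FP}, and the LNM claim from Theorem \ref{PropLNM}. But two things asserted in item (1) are not actually proved in your proposal. The first is connectedness, which you never address: transitivity of $H^{k}_{\infty}$ on fibres does not imply it (a disconnected, trivial Galois cover still has its deck group acting transitively on fibres). The paper proves connectedness by a separate argument: lift a simple arc $\delta$ running through $\infty,0,1,\lambda_{1},\lambda_{2},\ldots$ and check that $\pi_{\infty}^{-1}(\delta)$ is a connected graph whose vertices are the fixed points of the $a_{j}$ and whose complementary regions are discs. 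Some argument of this kind has to be supplied.

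The second is the chart construction itself, which you rightly single out as the delicate step but then do not carry out. If the fibre of $\pi_{\infty}$ over a regular value really is a principal homogeneous space under $\varprojlim H^{k}_{n+N}$ with its inverse-limit topology, then it is a Cantor set, the preimage of a small disc is homeomorphic to (disc)$\times$(Cantor set), and the limit is a solenoidal lamination rather than a surface; ``lifting a small disc'' produces a single plaque, not an open neighbourhood. This degeneration is not confined to the points lying over ${\mathcal B}'$, so deleting those does not by itself rescue the argument. The paper sidesteps this by working directly on the algebraic locus: over a disc $D(u_{0};r)\subset\widehat{\mathbb C}\setminus\overline{\mathcal B}$ it fixes local branches of the $k$-th roots and uses $u\mapsto[1:\sqrt[k]{u}:\sqrt[k]{u-1}:\sqrt[k]{u-\lambda_{1}}:\cdots]$ as a chart, then extends across the discrete set $\pi_{\infty}^{-1}({\mathcal B})$ because the local degree there is the finite number $k$. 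You need either to reproduce such a direct construction or to specify which topology (equivalently, which leaf or retopologization) makes the fibres discrete; as written, ``this gives a Riemann surface structure'' is precisely the missing step. A minor point on (4): your criterion ``fixed point in the limit iff fixed point at every finite level'' fails in the direction you need exactly because the $C_{n}(M)$ are non-compact --- which is why $a_{4},\ldots,a_{N+3}$ act freely --- but you do patch this by citing the computations of Section \ref{Sec:FP}, which is all the paper does as well.
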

\begin{proof}
Once we have proved that $C^{k}_{\infty}({\mathcal B};M)$ is a Riemann surface, we may observe that $H_{\infty}^{k} \cong {\mathbb Z}_{k}^{\mathbb N}$ is a group of conformal automorphisms and that it is the deck group of the holomorphic branched covering $\pi_{\infty}$ (providing parts (2) and (3)). All the above will ensure that $(C^{k}_{\infty}({\mathcal B};M),H^{k}_{\infty})$ is a ${\mathbb Z}_{k}^{\infty}$-gonal pair, in particular, (by Theorem \ref{PropLNM} ) that $C^{k}_{\infty}({\mathcal B};M)$ is homeomorphic to the LNM.
Part (4) is a consequence of Section \ref{Sec:FP}.

\subsubsection*{(i) $C^{k}_{\infty}({\mathcal B};M)$ is a Riemann surface}

Let $p=[p_{1}:p_{2}:\cdots] \in  C^{k}_{\infty}({\mathcal B};M)$, so $u_{0}=-(p_{2}/p_{1})^{k} \notin {\mathcal B}'$.

Let us first assume $u_{0} \notin {\mathcal B}$. As ${\mathcal B}$ is a discrete set and $u_{0} \notin \overline{\mathcal B}={\mathcal B} \cup {\mathcal B}'$, we may find an small disc $D(u_{0};r) \subset \widehat{\mathbb C} \setminus \overline{\mathcal B}$. Inside such a disc, we may find local branches of $k$-roots: $\sqrt[k]{u}$, $\sqrt[k]{u-1}$ and $\sqrt[k]{u-\lambda_{j}}$, for $j \geq 1$. This permits to define a local chart
$$u \in D(u_{0};r) \mapsto [1:\sqrt[k]{u}:\sqrt[k]{u-1}:\sqrt[k]{u-\lambda_{1}}:\cdots] \in C^{k}_{\infty}({\mathcal B};M).$$

The above permits to see that $C^{k}_{\infty}({\mathcal B};M) \setminus \pi_{\infty}^{-1}({\mathcal B})$ has the structure of a Riemann surface.

Next, as $\pi_{\infty}^{-1}({\mathcal B})$ is a discrete subset of $C^{k}_{\infty}({\mathcal B};M)$ and $\pi_{\infty}$ has finite local degree at each of these points (degree $k$), we may observe that the above Riemann surface structure extends to a Riemann surface structure on all $C^{k}_{\infty}({\mathcal B};M)$ making $\pi_{\infty}$ a holomorphic branched covering onto $\widehat{\mathbb C} \setminus {\mathcal B}'$, whose branch values are the elements of ${\mathcal B}$, each one of branch order $k$.

\subsubsection*{(ii) $C^{k}_{\infty}({\mathcal B};M)$ is connected}
To check the connectivity, we consider a simple path $\delta \subset \widehat{\mathbb C} \setminus {\mathcal B}'$, which starts at $\infty$ and connects to $0$, then to $1$, then to $\lambda_{1}$, then to $\lambda_{2}$, etc. By lifting $\delta$ under $\pi_{\infty}$, we obtain an infinite connected graph ${\mathcal G} \subset C^{k}_{\infty}({\mathcal B})$, whose vertices are all of the fixed points of the elements $a_{1}, a_{2}, \ldots$, and each one has degree $k$. Each connected component of $C^{k}_{\infty}({\mathcal B};M) \setminus {\mathcal G}$ is a topological disc (homeomorphic, under $\pi_{\infty}$, to $\widehat{\mathbb C} \setminus \left({\mathcal B}' \cup \delta\right)$ under $\pi_{\infty}$).
\end{proof}

%%%%%%%%%%%%%%%%%%

\begin{rema}[An unbranched quotient]\label{Sec:cociente}
Let us assume $M \neq \emptyset$. In this case, the Riemann surface $C^{k}_{\infty}({\mathcal B};M)$ admits the group $H^{k}_{\infty}=\langle a_{1},a_{2}.\ldots\rangle \cong {\mathbb Z}_{k}^{\mathbb N}$ as a group of conformal automorphisms. The subgroup $L=\langle a_{4},\ldots,a_{N+3}\rangle \cong {\mathbb Z}_{k}^{N}$ acts freely on $C^{k}_{\infty}({\mathcal B};M)$ and it is the deck group of the 
Galois (unbranched) covering
$$\pi_{L}:C^{k}_{\infty}({\mathcal B};M) \to C^{k}_{\infty}({\mathcal B};\emptyset)$$
$$[x_{1}:x_{2}:\cdots] \mapsto [x_{1}:x_{2}:x_{3}:x_{N+4}:x_{N+5}: \cdots].$$
\end{rema}

%%%%%%%%%%%%%%%%%%%
\subsection{\bf Algebraic equations for $(S,H)$ if ${\mathcal B}'$ finite}
In the above construction, we have obtained Riemann surface structures on the LNM (which are algebraically defined) and which satisfy the first three conditions of a generalized Fermat curve of type $(k,\infty)$. But, it might be that the maximality condition does not hold. Below, we restrict ourselves to the case when ${\mathcal B}'$ is finite and we show that, by taking $M$ equal to ${\mathcal B}'$ minus a point, we obtain a generalized Fermat curve of type $(k,\infty)$.

Let ${\mathcal B}'=\{p_{1},\ldots, p_{N+1}\}$, where $N \geq 0$, and set $M=\{p_{1},\ldots,p_{N}\}$ (if $N=0$, $M=\emptyset$).

\begin{theo}\label{main}
Under the above finiteness assumption,  
$(C^{k}_{\infty}({\mathcal B};M),H_{\infty}^{k})$ is a generalized Fermat pair of type $(k,\infty)$ and 
there a biholomorphism between $(S,H)$ and $(C^{k}_{\infty}({\mathcal B};M),H_{\infty}^{k})$.
\end{theo}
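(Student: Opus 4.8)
The plan is to establish the two assertions of Theorem~\ref{main} in turn: first that $(C^{k}_{\infty}({\mathcal B};M),H_{\infty}^{k})$ is a generalized Fermat pair of type $(k,\infty)$, i.e.\ that it satisfies the maximality condition (iv), and second that it is biholomorphic to the abstractly given pair $(S,H)$. By Theorem~\ref{main0} we already know $(C^{k}_{\infty}({\mathcal B};M),H_{\infty}^{k})$ is a ${\mathbb Z}_{k}^{\infty}$-gonal pair, so only (iv) is at stake. The key observation making the finiteness of ${\mathcal B}'$ usable is this: when ${\mathcal B}'=\{p_{1},\ldots,p_{N+1}\}$ is finite, the orbifold ${\mathcal O}$ with underlying surface $\widehat{\mathbb C}\setminus{\mathcal B}'$ and cone points ${\mathcal B}$ (each of order $k$) is the \emph{same} as the orbifold attached to the finite-type generalized Fermat data $C^{k}(p_{1},\ldots,p_{N},\lambda_{1},\ldots,\lambda_{n-2})$ after one removes the punctures over ${\mathcal B}'$, except that here $N+1$ of the marked points of the sphere are treated as punctures rather than cone points. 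So the strategy is to reduce (iv) to the finite-level uniqueness results already proved, namely Theorem~\ref{teouniformiza} and Corollary~\ref{universal}.

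Concretely, first I would invoke Theorem~\ref{teouniformiza}: pick a Fuchsian group $\Gamma$ with ${\mathbb H}^{2}/\Gamma = {\mathcal O} = S/H$ as orbifolds, form $\Gamma_{k}=\langle\Gamma',\Gamma^{k}\rangle$, and note $({\mathbb H}^{2}/\Gamma_{k},\Gamma/\Gamma_{k})$ is a generalized Fermat pair of type $(k,\infty)$ biholomorphic to $(S,H)$. It therefore suffices to prove $(C^{k}_{\infty}({\mathcal B};M),H^{k}_{\infty})$ is biholomorphic, as a pair, to $({\mathbb H}^{2}/\Gamma_{k},\Gamma/\Gamma_{k})$; the maximality condition for the latter then transfers. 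For this I would use Corollary~\ref{universal} applied with $R=C^{k}_{\infty}({\mathcal B};M)$ and $G=H^{k}_{\infty}$: since every finite-order element of $H^{k}_{\infty}\cong{\mathbb Z}_{k}^{\mathbb N}$ has order dividing $k$, and $\pi_{\infty}$ is a Galois branched covering onto $\widehat{\mathbb C}\setminus{\mathcal B}'$ with branch locus ${\mathcal B}$ of order $k$, the corollary produces a normal subgroup $J\lhd \Gamma/\Gamma_{k}$ with $C^{k}_{\infty}({\mathcal B};M) = ({\mathbb H}^{2}/\Gamma_{k})/J$ and $H^{k}_{\infty} = (\Gamma/\Gamma_{k})/J$. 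So the whole theorem comes down to showing $J$ is trivial, equivalently that the covering group $H^{k}_{\infty}$ is ``as large as possible'', equivalently that the deck transformation group of $\pi_{\infty}$ has no further quotient — which is exactly the statement that the surjection $\Gamma \to H^{k}_{\infty}$ induced by the covering has kernel precisely $\Gamma_{k}$.

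To pin down $\ker(\Gamma\to H^{k}_{\infty})=\Gamma_{k}$ I would argue at the level of the fundamental group of the base orbifold. The covering $\pi_{\infty}$ corresponds to a surjective homomorphism $\phi:\pi_{1}^{\mathrm{orb}}({\mathcal O})=\Gamma\to {\mathbb Z}_{k}^{\mathbb N}$. Because the cone points all have order $k$, $\phi$ kills all $k$-th powers, hence factors through $\Gamma/\Gamma_{k}$; I must show the induced map $\Gamma/\Gamma_{k}\to{\mathbb Z}_{k}^{\mathbb N}$ is injective, i.e.\ an isomorphism. Here the explicit inverse-limit construction pays off: $C^{k}_{\infty}({\mathcal B};M)=\varprojlim C_{n}(M)$, and at each finite level $C_{n}(M)$ is the finite-type generalized Fermat curve $C^{k}(p_{1},\ldots,p_{N},\lambda_{1},\ldots,\lambda_{n-2})$ with the punctures over ${\mathcal B}'$ removed, whose deck group over $\widehat{\mathbb C}\setminus{\mathcal B}'$ is $H^{k}_{n+N}\cong{\mathbb Z}_{k}^{n+N}$. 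The finite-type theorem (Theorem~\cite{GHL09,HKLP17}) together with the fact that $C^{k}(p_{1},\ldots,p_{N},\lambda_{1},\ldots,\lambda_{n-2})$ is the homology-plus-$k$-powers cover identifies this with $\Gamma^{(n)}/\Gamma^{(n)}_{k}$, where $\Gamma^{(n)}$ is the corresponding finite-type Fuchsian group. Passing to the inverse limit over $n$, and using that $\Gamma = \varprojlim$ of these quotient groups (compatibly with the maps $f_{mn}$, whose deck groups are the stated ${\mathbb Z}_{k}^{m-n}$ factors), gives $\Gamma/\Gamma_{k}\cong \varprojlim \Gamma^{(n)}/\Gamma^{(n)}_{k}\cong{\mathbb Z}_{k}^{\mathbb N}=H^{k}_{\infty}$, so $J$ is trivial. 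Finally, the biholomorphism of pairs $(C^{k}_{\infty}({\mathcal B};M),H^{k}_{\infty})\cong({\mathbb H}^{2}/\Gamma_{k},\Gamma/\Gamma_{k})\cong(S,H)$ follows, since both are the unique connected Galois cover of $\widehat{\mathbb C}\setminus{\mathcal B}'$ with the prescribed branching and deck group, and an isomorphism of deck groups respecting the covering maps lifts to a biholomorphism intertwining the group actions.

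The step I expect to be the main obstacle is controlling the passage to the inverse limit: one must check carefully that $\Gamma$ (the orbifold fundamental group of a sphere with infinitely many cone points accumulating on the finite set ${\mathcal B}'$, the latter treated as punctures) is genuinely recovered as the inverse limit of the finite-type orbifold groups along the system $(f_{mn})$, and that the commutator-plus-$k$-powers operation commutes with this inverse limit — a priori $\varprojlim(\Gamma^{(n)}/\Gamma^{(n)}_{k})$ could be larger than $(\varprojlim\Gamma^{(n)})/\overline{\Gamma_{k}}$. The finiteness of ${\mathcal B}'$ is precisely what keeps the ``extra'' genus-producing part of the surface sitting over the $V_{2}$-region of the Theorem~\ref{PropLNM} decomposition manageable and ensures $C_{n}(M)$ is analytically finite, so the exhaustion of $\widehat{\mathbb C}\setminus{\mathcal B}'$ by the discs $V_{1}$ is cofinal; I would make this precise using that decomposition. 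Once the inverse-limit compatibility is established, everything else is an assembly of the already-cited results.
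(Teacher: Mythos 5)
Your reduction of the theorem to the single claim ``$J$ is trivial'' is exactly the paper's: the paper applies the maximality condition (iv) of $(S,H)$ where you invoke Corollary \ref{universal}, and both routes produce the normal subgroup $J\lhd H$ with $S/J=C^{k}_{\infty}({\mathcal B};M)$ and $H/J=H^{k}_{\infty}$. The gap is in your proposed proof that $J=1$. The chain $\Gamma/\Gamma_{k}\cong\varprojlim\bigl(\Gamma^{(n)}/\Gamma^{(n)}_{k}\bigr)\cong{\mathbb Z}_{k}^{\mathbb N}=H^{k}_{\infty}$ fails at the first isomorphism: the inverse limit of the finite deck groups ${\mathbb Z}_{k}^{n+N}$ along the surjections dual to the $f_{mn}$ is the full direct product, which is uncountable, whereas $\Gamma/\Gamma_{k}$ is countable (a quotient of the countably generated group $\Gamma$) and $H^{k}_{\infty}=\langle a_{1},a_{2},\ldots\rangle$ is the countable direct sum sitting properly inside that product. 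So the obstacle you flag in your last paragraph --- that the inverse limit could be strictly larger --- is not a technicality to be checked but an actual failure of the proposed identification, and the argument does not close.

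What is actually needed is much more elementary and is what the paper does: write any $b\in J$ as a word $t_{i_{1}}\cdots t_{i_{r}}d_{j_{1}}\cdots d_{j_{m}}$ in the $\Gamma_{k}$-classes of the generators $\alpha_{i},\delta_{j}$ of $\Gamma=\langle\alpha_{1},\ldots,\alpha_{N},\delta_{1},\delta_{2},\ldots:\delta_{j}^{k}=1\rangle$; observe that the induced surjection $H\to H^{k}_{\infty}$ sends the $t_{i}$ to the fixed-point-free generators $a_{4},\ldots,a_{N+3}$ and the $d_{j}$ to the generators $a_{1},a_{2},a_{3},a_{N+4},\ldots$ acting with fixed points, i.e.\ it carries a free ${\mathbb Z}_{k}$-basis of $H$ bijectively onto the full generating set $\{a_{j}\}$; and then use the diagonal form of the $a_{j}$ (which shows they satisfy no relations beyond commutativity and order $k$) to conclude from $1=a_{i_{1}}\cdots a_{i_{r}}a_{j_{1}}\cdots a_{j_{m}}$ that $b$ must be trivial (the residual possibility $b\in\langle d_{i}\rangle$ is excluded because $J=F/\Gamma_{k}$ with $F$ torsion free, so $J$ acts freely on $S$). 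This generator-matching, which uses the explicit fixed-point computations of Section \ref{Sec:FP}, is the crux of the proof and is absent from your proposal; once it is in place, no passage to an inverse limit of groups is required.
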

\begin{proof}
Let $\Gamma$ be a Fuchsian group such that ${\mathbb H}^{2}/\Gamma=S/H$. Then $S={\mathbb H}^{2}/\Gamma_{k}$ and $H=\Gamma/\Gamma_{k}$.
In this case, we have that $$\Gamma=\langle \alpha_{1},\ldots,\alpha_{N}, \delta_{1},\delta_{2},\ldots: \delta_{j}^{k}=1, \; j=1,2,\ldots\rangle. $$

By the maximality condition (in the definition of generalized Fermat curves), there is a torsion-free normal subgroup $F$ of $\Gamma$ such that $C^{k}_{\infty}({\mathcal B};M)={\mathbb H}^{2}/F$ and $H^{k}_{\infty}=\Gamma/F$. As $H^{k}_{\infty} \cong {\mathbb Z}_{k}^{\mathbb N}$, $\Gamma_{k} \lhd F$. In this way, $J=F/\Gamma_{k} \lhd H$ is such that $S/J=C^{k}_{\infty}({\mathcal B};M)$ and $H/J=H^{k}_{\infty}$. We need to prove that $J$ is the trivial group. 

Let us assume there is some $b \in J$, $b \neq 1$. Then, we may write $b=t_{i_{1}}\cdots t_{i_{r}} d_{j_{1}}d_{j_{2}}\cdots d_{j_{m}}$, where $d_{j}$ (respectively, $t_{i}$)  is the $\Gamma_{k}$-class of $\delta_{j}$ (repspectively, $\alpha_{i}$). 
The element $b$ induces the identity element in $H^{k}_{\infty}$. Each $t_{i_{l}}$ induces a generator $a_{i_{l}} \in \{a_{4},\ldots,a_{N+3}\}$ (each of them has not fixed points) and
each $d_{j_{l}}$ induces a generator $a_{j_{l}} \in \{a_{1},a_{2},a_{3},a_{N+4},a_{N+5},\ldots\}$ (each of them has fixed points).
It follows that $1=a_{i_{1}} \cdots a_{i_{r}} a_{j_{1}} a_{j_{2}}\cdots a_{j_{m}}$.

By the (diagonal) form of the elements $a_{j}$, we may observe that this obligates to have $r=0$, and that $j_{1}=\cdots=j_{m}=i$. that is, $b \in \langle \delta_{i} \rangle$, for some fixed $i$. As $b$ cannot have fixed points, then $b=1$, a contradiction to our assumption.
\end{proof}

%%%%%%%%%%%%%%%%%
\subsubsection{\bf A fiber product interpretation}\label{Sec:fiber}
Let us assume, as above, that $\#{\mathcal B}'=1+N$, and 
that $N\geq 1$. In this case, 
$\Gamma=\langle \alpha_{1},\ldots,\alpha_{N},\delta_{1},\delta_{2},\ldots: \delta_{j}^{k}=1, \; j \geq 1\rangle$ and 
$$H=\Gamma/\Gamma_{k}=H_{1} \times H_{2}$$
where 
$$H_{1}=\langle \alpha_{1},\ldots, \alpha_{N}\rangle/\Gamma_{k}=\langle b_{1},\ldots,b_{N}\rangle \cong {\mathbb Z}_{k}^{N},$$
$$H_{2}=\langle \delta_{1}, \delta_{2},\ldots \rangle/\Gamma_{k}= \langle d_{1}, d_{2}, \ldots \rangle \cong {\mathbb Z}_{k}^{\mathbb N}.$$

The group $H_{1}$ acts freely on $S={\mathbb H}/\Gamma_{k}$ and every $d_{j}$ acts with fixed points (in $H_{2}$, the only elements acting with fixed points are the powers of these generators).

The quotient Riemann surface $S/H_{1}$ is uniformized by the Fuchsian group $\langle \Gamma', \gamma \alpha_{j} \gamma^{-1}: j=1,\ldots,N, \gamma \in \Gamma\rangle \lhd \Gamma$ and it is isomorphic to 
$C^{k}_{\infty}({\mathcal B};\emptyset)$. Also, $H^{k}_{\infty}=H/H_{1}$. Moreover, if $P_{1}:S \to C^{k}_{\infty}({\mathcal B};\emptyset)$ is a Galois (unbranched) covering with deck group $H_{1}$, then the Galois branched covering $\pi:S \to \widehat{\mathbb C} \setminus {\mathcal B}'$, with $H$ as its deck group, is given by $\pi_{\infty} \circ P_{1}$.

Note that, as seen in Remark \ref{Sec:cociente}, we also have the Galois (unbranched) covering $\pi_{L}:C^{k}_{\infty}({\mathcal B};M) \to C^{k}_{\infty}({\mathcal B};\emptyset)$, where 
$L=\langle a_{4},\ldots,a_{N+3}\rangle \cong {\mathbb Z}_{k}^{N}$ acts freely on $C^{k}_{\infty}({\mathcal B}:M)$.

The Riemann surface structure $R^{*}$ of the quotient orbifold $S/H_{2}$ is given as the complement of $Nk^{N-1}$ points (these are the preimages of the set ${\mathcal B}'$) of the 
generalized Fermat curve of type $(k,N)$
$$R:=
\left\{
\begin{array}{rcc}
y_{1}^{k}+y_{2}^{k}+y_{3}^{k}&=&0\\
\mu_{1}y_{1}^{k}+y_{2}^{k}+y_{4}^{k}&=&0\\
&\vdots&\\
\mu_{N-2}y_{1}^{k}+y_{2}^{k}+y_{N+1}^{k}&=&0
\end{array}
\right\},
$$
where $$\mu_{j}=\frac{(p_{3}-p_{1})(p_{3+j}-p_{2})}{(p_{3}-p_{2})(p_{3+j}-p_{1})},\; j=1,\ldots,N-2,$$
and whose generalized Fermat group is $H_{N}^{k}=H/H_{2}$. The map
$$\pi_{1}:R \to \widehat{\mathbb C}:[y_{1}:\cdots:y_{N+1}] \mapsto \frac{p_{2}(p_{3}-p_{1})y_{1}^{k}+p_{1}(p_{3}-p_{2})y_{2}^{k}}{(p_{3}-p_{1})y_{1}^{k}+(p_{3}-p_{2})y_{2}^{k}}$$
is a Galois branched covering with deck group $H_{N}^{k}$ and whose branch values are $p_{1},\ldots,p_{N+1}$. Note that $\pi_{1}=T\circ \pi_{N}$, where $T$ is the M\"obius transformation such that $T(\infty)=p_{1}$, $T(0)=p_{2}$ and $T(1)=p_{3})$.

The above permits to observe that $S$ is a connected components of the fiber product of $(R^{*},\pi_{1})$ with $(C^{k}_{\infty}({\mathcal B};\emptyset),\pi_{\infty})$. Such a fiber product is given by those pairs $$([y_{1}: \cdots:y_{N+1}],[x_{1}:x_{2}:\cdots]) \in {\mathbb P}^{N} \times {\mathbb P}^{\mathbb N}$$
$$
\left\{
\begin{array}{rcl}
y_{1}^{k}+y_{2}^{k}+y_{3}^{k}&=&0\\
\mu_{1}y_{1}^{k}+y_{2}^{k}+y_{4}^{k}&=&0\\
&\vdots&\\
\mu_{N-2}y_{1}^{k}+y_{2}^{k}+y_{N+1}^{k}&=&0\\
x_{1}^{k}+x_{2}^{k}+x_{3}^{k}&=&0\\
\lambda_{1}x_{1}^{k}+x_{2}^{k}+x_{4}^{k}&=&0\\
\lambda_{2}x_{1}^{k}+x_{2}^{k}+x_{5}^{k}&=&0\\
&\vdots&\\
-\left(\dfrac{x_{2}}{x_{1}}\right)^{k} &\notin &{\mathcal B}'\\
\dfrac{p_{2}(p_{3}-p_{1})y_{1}^{k}+p_{1}(p_{3}-p_{2})y_{2}^{k}}{(p_{3}-p_{1})y_{1}^{k}+(p_{3}-p_{2})y_{2}^{k}}&=&-\left(\dfrac{x_{2}}{x_{1}}\right)^{k}
\end{array}
\right\}.
$$

We also have the projection 
$$\pi_{*}([y_{1}:\cdots:y_{N+1}), [x_{1}:x_{2}:\cdots])= -\left(\dfrac{x_{2}}{x_{1}}\right)^{k}$$
restrict to the connected component $S$ to provide the Galois branched covering $\pi:S \to \widehat{\mathbb C} \setminus {\mathcal B}'$.

We conjecture that all the connected components of the above fiber product are biholomorphic between them (so they are biholomorphic to $C^{k}_{\infty}({\mathcal B};M)$).

\begin{rema}
For $N=1$, in the above we take $R^{*}={\mathbb C} \setminus \{0\}$ and 
$\pi_{1}(z)=(p_{1}-z^{2}p_{2})/(1-z^{2})$.

\end{rema}

%%%%%%%%%%%%%%%%%%%%%%%
%%%%%%%%%%%%%%%%%%%%%%%
\section{Hyperelliptic Riemann surfaces of infinite type}\label{Sec:hiper}
A closed Riemann surface $R$ of genus $g \geq 2$ is called hyperelliptic if it admits a conformal automorphism $\tau$ of order two having exactly $2(g+1)$ fixed points (equivalently, there is a degree two branched covering $\pi:R \to \widehat{\mathbb C}$). In this case, $\tau$ is unique and it is called the hyperelliptic involution of $R$.
These hyperelliptic surfaces are unbranched quotients of generalized Fermat curves of type $(2,2g+1)$ by the unique index two subgroup of its corresponding generalized Fermat group which acts freely.

In this section, we consider hyperelliptic Riemann surfaces of infinite type.

\subsection{Hyperelliptic Riemann surfaces of infinite type}
A non-compact Riemann surface $R$ is called a hyperelliptic Riemann surface of infinite type if:
\begin{enumerate}
\item there exists an infinite discrete subset ${\mathcal B} \subset \widehat{\mathbb C}$ such that, if ${\mathcal B}'$ denotes its limit set, then $\widehat{\mathbb C} \setminus {\mathcal B}'$ is connected; and
\item there exists a degree two branched covering $\pi:R \to \widehat{\mathbb C}\setminus {\mathcal B}'$ whose branch locus set is ${\mathcal B}$. In this case,  the deck group of $\pi$ is $\langle \tau \rangle$, where $\tau \in {\rm Aut}(R)$ has order two; we say that $\tau$ is a hyperelliptic involution of $R$. 
\end{enumerate}

\begin{prop}\label{prop3}
Let $R$ be a hyperelliptic Riemann surface of infinite type and $\tau \in {\rm Aut}(R)$ be a hyperelliptic involution. Then $R$ has an infinite genus and every end is fixed by $\tau$ (so, there is a bijection between the ends of $R$ and the ends of $\widehat{\mathbb C} \setminus {\mathcal B}'$). In particular, $\#{\mathcal B}'=1$ if and only if $R$ is homeomorphic to the LNM.
\end{prop}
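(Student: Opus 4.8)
The plan is to analyze the covering $\pi\colon R \to \widehat{\mathbb C}\setminus{\mathcal B}'$ locally near $\widehat{\mathbb C}$-ends and globally by a compact-exhaustion argument. First I would establish the infinite-genus claim. Pick any finite subset $\{b_1,\dots,b_{2m}\}\subset{\mathcal B}$ of even cardinality and a Jordan domain $V\subset\widehat{\mathbb C}\setminus{\mathcal B}'$ containing exactly these branch values; then $\pi^{-1}(\overline V)$ is a finite union of components, each a double branched cover of a disc branched over $2m$ points, hence a compact subsurface of genus $m-1$ (Riemann--Hurwitz) with one or two boundary curves. Since ${\mathcal B}$ is infinite and discrete we can take $m$ arbitrarily large, so $R$ contains compact subsurfaces of arbitrarily large genus; thus the genus of $R$ is infinite.

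Next I would show $\tau$ fixes every end and induces a bijection $\mathrm{Ends}(R)\to\mathrm{Ends}(\widehat{\mathbb C}\setminus{\mathcal B}')$. The map $\pi$ is proper (it is a finite-degree branched covering), so it induces a continuous map $\pi_*\colon\mathrm{Ends}(R)\to\mathrm{Ends}(\widehat{\mathbb C}\setminus{\mathcal B}')$; properness also makes $\pi_*$ surjective, and since $\langle\tau\rangle$ is the deck group, $\pi_*$ identifies $\mathrm{Ends}(R)/\langle\tau\rangle$ with $\mathrm{Ends}(\widehat{\mathbb C}\setminus{\mathcal B}')$. To see $\tau$ acts trivially on ends — equivalently $\pi_*$ is injective — I would use that an end $e$ of $\widehat{\mathbb C}\setminus{\mathcal B}'$ is represented by a nested sequence $(W_n)$ of punctured-neighborhood type pieces, and near each point $p\in{\mathcal B}'$ the branch values of $\pi$ accumulate, so the number of sheets of $\pi$ glued along the lifts of a small loop in $W_n$ is connected (a loop encircling an odd number of branch points lifts to a single loop covering both sheets). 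Concretely: choose a representative $(W_n)$ where each $W_n$ contains an odd number of points of ${\mathcal B}$; then $\pi^{-1}(W_n)$ is connected, so it determines a single end of $R$ lying over $e$. Hence $\pi_*^{-1}(e)$ is a single end, which is therefore $\tau$-invariant.

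The main obstacle is the previous step's claim that every end of $\widehat{\mathbb C}\setminus{\mathcal B}'$ admits a neighborhood basis of connected open sets whose $\pi$-preimage is connected. This is where the hypothesis that ${\mathcal B}$ is discrete with limit set ${\mathcal B}'$ is essential: near a point $p\in{\mathcal B}'$ the branch values do not accumulate in any deleted neighborhood in a controlled-parity way automatically, so I must argue that one can always shrink $W_n$ so that the monodromy of $\pi$ restricted to $\partial W_n$ is the nontrivial element of $\mathbb Z_2$ — i.e.\ that $\#({\mathcal B}\cap W_n)$ is odd for a cofinal family. Since ${\mathcal B}\cap W_n$ is infinite for $W_n$ a neighborhood of an end in ${\mathcal B}'$, I can excise one branch value at a time using small Jordan curves avoiding ${\mathcal B}$ to adjust parity, keeping the piece connected with connected preimage; the nesting and the defining property (3) of nested sequences (every compact set is eventually missed) guarantee cofinality. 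Granting this, $\pi_*$ is a homeomorphism, so $R$ has exactly one end precisely when $\widehat{\mathbb C}\setminus{\mathcal B}'$ does, i.e.\ when $\#{\mathcal B}'=1$; combined with infinite genus this is exactly the characterization of the LNM from Section \ref{d:loch_nesss_monster}, giving the final ``if and only if''.
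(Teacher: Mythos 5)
Your overall strategy coincides with the paper's: infinite genus is obtained by applying Riemann--Hurwitz to the preimage of a disc containing many branch values, and $\tau$-invariance of each end is deduced from the fact that branch values of $\pi$ accumulate at every end of $\widehat{\mathbb C}\setminus{\mathcal B}'$. The one place where your write-up goes astray is precisely the step you single out as the ``main obstacle'': you cannot ``choose a representative $(W_n)$ where each $W_n$ contains an odd number of points of ${\mathcal B}$'', because every end-neighborhood $W_n$ contains \emph{infinitely} many points of ${\mathcal B}$ (the ends of $\widehat{\mathbb C}\setminus{\mathcal B}'$ sit at ${\mathcal B}'$, which is exactly the accumulation set of ${\mathcal B}$), so there is no parity to arrange and the proposed ``excise one branch value at a time'' procedure does not make sense. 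Fortunately the parity is irrelevant to what you actually need: for a degree-two branched covering, a connected open set $W$ has connected preimage as soon as $W$ contains a \emph{single} branch value, since the local monodromy around that point is already the nontrivial element of ${\mathbb Z}_{2}$. The parity of $\#({\mathcal B}\cap W)$ only governs whether $\partial W$ lifts to one or two circles --- which you do use correctly in the genus computation (an even count gives two boundary circles and genus $m-1$), but which plays no role in showing $\pi^{-1}(W_n)$ is connected. With that repair, your argument that $\pi_{*}$ is a bijection on ends, hence that $R$ is the LNM exactly when $\#{\mathcal B}'=1$, goes through and is a more detailed version of the paper's terser statement that each end of $R$ is accumulated by fixed points of $\tau$ and must therefore be fixed by $\tau$.
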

\begin{proof}
Let $R$ be a hyperelliptic Riemann surface of infinite type, $\tau$ a hyperelliptic involution and let $\pi:R \to \widehat{\mathbb C} \setminus {\mathcal B}'$ be a two-fold branched covering with branch locus ${\mathcal B}$ (as in the above definition) and deck group $\langle \tau \rangle$.
We may assume ${\mathcal B}=\{\infty,0,1,\lambda_{1},\lambda_{2},\ldots\}$. 
Let $\gamma_{n}$ be a simple closed loop surrounding a disc $D_{n}$ that contains the set $\{\infty,0,1,\lambda_{1},\ldots,\lambda_{2n}\}$ and such that all other points in ${\mathcal B} \cup {\mathcal B}'$ are in the complementary region. The lifting of $\gamma_{n}$ under $\pi$ is a simple loop in $R$, invariant under the corresponding hyperelliptic involution $\tau$. The region $R_{n}:=\pi^{-1}(\overline{D_{n}}) \subset R$ is homeomorphic to a closed surface of genus $g_{n}=n+1$, which is also invariant under $\tau$ (one may think of $R_{n}$ as a hyperelliptic Riemann surface from which we have deleted an open disc neighborhood of a fixed point of its hyperelliptic involution). In this way, we may observe that $R$ has an infinite genus. 
Next, as each end of the quotient orbifold is accumulated by branch values, it follows that each end of $R$ can be approached by fixed points of $\tau$. As the action of $\tau$ extends continuously to an action of the ends, we may observe that each end must be fixed by that involution.

\end{proof}

\begin{rema}
For any two pairs $(R_{1},\tau_{1})$ and $(R_{2},\tau_{2})$, where $R_{j}$ is hyperelliptic and $\tau_{j} \in {\rm Aut}(R_{j})$ is a hyperelliptic involutions, there is a homeomorphism $\varphi:R_{1} \to R_{2}$ such that $\varphi \circ \tau_{1} \circ \varphi^{-1}=\tau_{2}$ if and only if the quotient orbifolds $R_{1}/\tau_{1}$ and $R_{2}/\tau_{2}$ are topologically equivalent as orbifolds.
\end{rema}

%%%%%%%%%
\subsection{Uniformization and connection to generalized Fermat curves of type $(2,\infty)$}
Let $R$ be a hyperelliptic Riemann surface of infinite type, $\tau$ a hyperelliptic involution and let $\pi:R \to \widehat{\mathbb C} \setminus {\mathcal B}'$ be a two-fold branched covering with branch locus ${\mathcal B}$ (as in the above definition) and deck group $\langle \tau \rangle$.
Let us fix a Fuchsian group $\Gamma$ such that $R/\langle \tau \rangle = {\mathbb H}^{2}/\Gamma$.

There is an index two torsion free subgroup $\Gamma_{R} \lhd \Gamma$ such that $R={\mathbb H}^{2}/\Gamma_{R}$ and $\langle \tau \rangle =\Gamma/\Gamma_{R}$.

\begin{rema}
A difference with the case of closed hyperelliptic Riemann surfaces, in which there is a unique index two torsion-free subgroup of the corresponding group $\Gamma$, is that, for $\#{\mathcal B}>1$, the group $\Gamma$ has many different index two torsion-free subgroups.
\end{rema}

Associated to ${\mathcal B}$ is the generalized Fermat pair $(S={\mathbb H}^{2}/\Gamma_{2},H=\Gamma/\Gamma_{2})$ of type $(2,\infty)$, where
$\Gamma_{2}=\langle \Gamma',\Gamma^{2}\rangle=\Gamma^{2}$, 
such that $S/H=R/\langle \tau \rangle$. By the maximality condition of the generalized Fermat pair $(S,H)$, there is an index two subgroup $H_{R}$ of $H$, acting freely on $S$, such that $R=S/H_{R}$ and $\langle \tau \rangle = H/H_{R}$. In this case, $H_{R}=\Gamma_{R}/\Gamma_{2}$. Below, we make explicit the above in the case when ${\mathcal B}'$ is finite.

%%%%%%%%%%%%%%%
\subsection{Hyperelliptic Riemann surfaces for ${\mathcal B}'$ finite}
Let $R$ be a hyperelliptic Riemann surface of infinite type and $\tau \in {\rm Aut}(R)$ be a hyperelliptic involution of $R$. Let us assume, following the previous notations, that
${\mathcal B}'$ is finite, say 
$${\mathcal B}'=\{q_{1},\ldots,q_{N+1}\} \; (N \geq 0).$$

We may assume 
$${\mathcal B}=\{\infty,0,1,\lambda_{1},\lambda_{2},\ldots\}=B_{1} \sqcup \cdots \sqcup B_{N+1},$$
where each $B_{j}$ has infinite cardinality and its unique limit point is $q_{j}$. Note that the partition $\{B_{1},\ldots,B_{N+1}\}$ is not unique.

%%%%%%%%%%%%%
\subsubsection{\bf Fuchsian groups description}
In this case, $R/\langle \tau \rangle = {\mathbb H}^{2}/\Gamma$, where
$$\Gamma=\langle \alpha_{1},\ldots, \alpha_{N},\delta_{1}, \delta_{2}, \ldots : \delta_{j}^{2}=1\rangle.$$

As previously noted, there is a torsion-free index two subgroup $\Gamma_{R}$ of $\Gamma$ such that $R={\mathbb H}^{2}/\Gamma_{R}$ and $\langle \tau \rangle = \Gamma/\Gamma_{R}$.

The collection of all such subgroups is given by the kernels $\Gamma_{i_{1}i_{2}\cdots i_{N}}$, where $i_{j} \in \{0,1\}$, of the following $2^{N}$ surjective homomorphisms
$$\theta_{i_{1}i_{2}\cdots i_{N}}:\Gamma \to {\mathbb Z}_{2}=\langle x: x^{2}=1\rangle$$
defined by
$$\theta_{i_{1}i_{2}\cdots i_{N}}(\alpha_{j})=x^{i_{j}}, \quad j=1,\ldots,N,$$
$$\theta_{i_{1}i_{2}\cdots i_{N}}(\delta_{j})=x.$$

In this way, we obtain $2^{N}$ hyperelliptic Riemann surfaces 
$$R_{i_{1}i_{2}\cdots i_{N}}= {\mathbb H}^{2}/ \Gamma_{i_{1}i_{2}\cdots i_{N}}$$
admitting a hyperelliptic involution $\tau_{i_{1}i_{2}\cdots i_{N}}$ (induced by $\Gamma$) and a degree two branched covering (with deck group $\langle \tau_{i_{1}i_{2}\cdots i_{N}} \rangle$)
$$\pi_{i_{1}i_{2}\cdots i_{N}}:R_{i_{1}i_{2}\cdots i_{N}} \to \widehat{\mathbb C} \setminus \{q_{1},\ldots,q_{N+1}\},$$
with the same branch locus ${\mathcal B}$.

\begin{rema}[If $N \geq 1$, then ${\mathcal B}$ does not determine the pair $(R,\tau)$]
If $N=0$, then there is a unique index two torsion free subgroup of $\Gamma$. But, if $N \geq 1$, then we have $2^{N}$ possibilities. In this situation, 
let $R_{1}={\mathbb H}^{2}/\Gamma_{1}$ and $R_{2}={\mathbb H}^{2}/\Gamma_{2}$ (where $\Gamma_{j}$ is an index two torsion free subgroup of $\Gamma$) be any two of these hyperelliptic Riemann surfaces and denote by $\tau_{j}$ (respectively, $\pi_{j}$) the corresponding hyperelliptic involution on $R_{j}$ (respectively, the above degree two branched covering). If there is a biholomorphism $\varphi:R_{1} \to R_{2}$ that conjugates $\tau_{1}$ to $\tau_{2}$, then it induces a M\"obius transformation $M$ such that $M \circ \pi_{2} = \pi_{1} \circ \varphi$. In particular, $M$ keeps invariant the sets $\{q_{1},\ldots,q_{N+1}\}$ and also the set ${\mathcal B}$. So, for instance, if $N \geq 3$, then the first condition asserts that, for generic choices of the points $q_{j}$, $M=I$. But this condition asserts that $\varphi$ must be induced by an element of $\Gamma$ that must conjugates $\Gamma_{1}$ onto $\Gamma_{2}$. As $\Gamma_{1}$ is a normal subgroup of $\Gamma$, it means that $\Gamma_{1}=\Gamma_{2}$. In this way, we obtain non-isomorphic pairs $(R_{1},\tau_{1})$ and $(R_{2},\tau_{2})$ for which $R_{1}/\langle \tau_{1}\rangle=R_{2}/\langle \tau_{2}\rangle$, which is not possible in the closed situation.
\end{rema}

\begin{question}
Is the hyperelliptic involution unique?
\end{question}

%%%%%%%%%%%%%
\subsubsection{\bf Algebraic descriptions}\label{Sec:ecuaciones}
If 
$$T(z)=\left\{\begin{array}{ll}
\dfrac{(q_{3}-q_{1})(z-q_{2})}{(q_{3}-q_{2})(z-q_{1})},& N\geq 2;\\
\\
\dfrac{z-q_{2}}{z-q_{1}},& N=1;\\
\\
\dfrac{q_{1}}{q_{1}-z},& N=0;\\
\end{array}
\right.
$$
then 
$$T \circ \pi_{i_{1}i_{2}\cdots i_{N}}:R_{i_{1}i_{2}\cdots i_{N}} \to \widehat{\mathbb C} \setminus \{\infty,0,1,T(q_{4}),\ldots,T(Q_{N+1})\}$$
is a degree two branched covering, with deck group $\langle \tau_{i_{1}i_{2}\cdots i_{N}} \rangle$, and whose branch value set is, respectively,
$$\{T(\infty), T(0), T(1), T(\lambda_{1}), \ldots\} \subset{\mathbb C} \setminus \{0,1,T(q_{4}),\ldots,T(Q_{N+1})\}, \; N\geq 3.$$
$$\{T(\infty), T(0), T(1), T(\lambda_{1}), \ldots\} \subset{\mathbb C} \setminus \{0,1\}, \; N=2.$$
$$\{T(\infty), T(0), T(1), T(\lambda_{1}), \ldots\} \subset{\mathbb C} \setminus \{0\}, \; N=1.$$
$$\{T(\infty), T(0), T(1), T(\lambda_{1}), \ldots\} \subset{\mathbb C}, \; N=0.$$

The Weierstrass product theorem asserts that given any collection of different points $p_{k} \in {\mathbb C}$, $k=1,2,\ldots$, such that $\lim_{k \to +\infty}p_{k} =\infty$, there is an entire map $f:{\mathbb C} \to {\mathbb C}$ whose only zeroes are $p_{k}$ (all of them simple) and that such map is unique up to multiplication by $e^{m(z)}$, for some entire map $m(z)$. Moreover, if there is some integer $M>0$ such that (assuming $p_{k} \neq 0$ for $k \geq 2$)
$$\sum_{k=2}^{ \infty}1/|p_{k}|^{M+1}<\infty,$$
then such a map $f$ can be chosen of the form

$$f(z)=\left\{\begin{array}{ll}
\prod_{k=1}^{\infty} \left(1-\frac{z}{p_{k}}\right) e^{\frac{z}{p_{k}}+\frac{z^{2}}{2p_{k}^{2}}+\cdots+\frac{z^{M}}{M p_{k}^{M}}}, \; \mbox{if $p_{k} \neq 0$ for every  $k$}.\\
z\prod_{k=2}^{\infty} \left(1-\frac{z}{p_{k}}\right) e^{\frac{z}{p_{k}}+\frac{z^{2}}{2p_{k}^{2}}+\cdots+\frac{z^{M}}{M p_{k}^{M}}}, \; \mbox{if $p_{1} =0$}.
\end{array}
\right.
$$

Now, for each $j=1,\ldots,N+1$,
set $A_{j}=T(B_{j})$  and 
let us consider an entire map $f_{j}:{\mathbb C} \to {\mathbb C}$ such that:
\begin{enumerate}
\item the set of zeroes of $f_{1}$ is $A_{1}$, each one being simple;
\item the set of zeroes of $f_{2}$ is $\{1/p:p \in A_{2}\}$, each one being simple;
\item the set of zeroes of $f_{3}$ is $\{1/(p-1):p \in A_{3}\}$, each one being simple;
\item for $j \geq 4$, the set of zeroes of $f_{j}$ is $\{1/(p-T(q_{j})):p \in A_{j}\}$, each one being simple.
\end{enumerate}

Let $h:{\mathbb C} \setminus\{0,1,T(q_{1}),\ldots,T(q_{N+1})\} \to {\mathbb C}$ be a holomorphic map without zeroes. Then, the above $2^{N}$ hyperelliptic Riemann surfaces can be described by
$$w^{2}=h(z)f_{1}(z)f_{2}\left(\frac{1}{z}\right)f_{3}\left(\frac{1}{z-1}\right) \prod_{j=4}^{N+1} f_{j}\left(\frac{1}{z-T(q_{j})}\right).$$

Note that we may forget the exponential factors $e^{m_{j}(z)}$ in the forms of the $F_{j}(z)$.

In the above algebraic model, as was for the compact case, the hyperelliptic involution $\tau$ is given by $\tau(z,w)=(z,-w)$.

%%%%%%%%%%%%%
\subsection{Hyperelliptic Riemann surfaces on the LNM}
As previously noted, the only case we obtain a hyperelliptic Riemann surface on the LNM is for $N=0$.
In this case, ${\mathcal B}=B_{1}=\{\infty,0,1,\lambda_{1},\lambda_{2},\ldots\}$, with ${\mathcal B}'=\{q_{1}\}$, 
the generalized Fermat pair is $(C^{2}_{\infty}({\mathcal B};\emptyset),H_{\infty}^{2})$ and the corresponding Galois branched covering $\pi_{\infty}:C^{2}_{\infty}({\mathcal B};\emptyset) \to \widehat{\mathbb C} \setminus \{q_{1}\}$.
If  $T(z)=q_{1}/(q_{1}-z)$, then 
$P=T \circ \pi_{\infty}:C^{2}_{\infty}({\mathcal B};\emptyset) \to {\mathbb C}$ is a Galois branched covering with $H^{2}_{\infty}$ as its deck group. The branch values set of $P$ is given by 
$${\mathcal B}_{P}:=\{\mu_{1}=0, \mu_{2}=1, \mu_{3}=q_{1}/(q_{1}-1), \mu_{4}=q_{1}/(q_{1}-\lambda_{1}), \ldots\}$$ which accumulates at $\infty$.

The hyperelliptic Riemann surface is $R=C^{2}_{\infty}({\mathcal B};\emptyset)/F$, where $F$ is the unique index two subgroup in $H_{\infty}^{2}$ acting freely on $C^{2}_{\infty}({\mathcal B};\emptyset)$
(i.e., which does not contain the generators $a_{j}$), which is the kernel of 
the surjective homomorphism
$$\theta:H_{\infty}^{2} \to {\mathbb Z}_{2}=\langle a : a^{2}=1\rangle: 
a_{j} \mapsto \theta(a_{j})=a,
$$
that is,
$F=\langle a_{j}a_{1}: j \geq 2 \rangle.$

The cyclic group $L=H/F \cong {\mathbb Z}_{2}$ is the group generated by a hyperelliptic involution $\tau$.  The surface
$R$ is described by the affine infinite plane curve
$$C:\left\{\begin{array}{c}
w^{2}=zf(z)
\end{array}
\right\} \subset {\mathbb C}^{2},
$$
where $f:{\mathbb C} \to {\mathbb C}$ is an entire map whose zeros (all of them simple ones) are given by the points $\mu_{j}$, for $j \geq 2$.
In this model,
$\tau(z,w)=(z,-w)$.

In the case that  there is some integer $M>0$ such that 
$$\sum_{k=2}^{ \infty}1/|\mu_{k}|^{M+1}<\infty,$$
then

$$C:\left\{\begin{array}{c}
w^{2}=z\prod_{k=2}^{\infty} \left(1-\frac{z}{\mu_{k}}\right) e^{\frac{z}{\mu_{k}}+\frac{z^{2}}{2\mu_{k}^{2}}+\cdots+\frac{z^{M}}{M \mu_{k}^{M}}}
\end{array}
\right\} \subset {\mathbb C}^{2},
$$
and
$\tau(z,w)=(z,-w)$.
The cyclic group $G=\langle a \rangle \cong {\mathbb Z}_{2}$ is the deck group of the degree two Galois branched covering
$$\pi:C \to {\mathbb C}:(z,w) \mapsto z,$$
whose branch values is the set ${\mathcal B}_{P}$.

%%%%%%%%%%%%%% 
\subsubsection{\bf A remark}
Each time we have a surjective homomorphism $\theta:H^{2}_{\infty} \to {\mathbb Z}_{2}^{m}$, whose kernel $F$ does not contains any of the elements $a_{1}, a_{2}, \ldots$, then $R=C_{\infty}^{2}({\mathcal B};\emptyset)/F$ is a Riemann surface of infinite type admitting the group $J={\mathbb Z}_{2}^{m}$ as a group of conformal automorphisms such that $R/J=C_{\infty}^{2}/H_{\infty}^{2}$. A natural question is if we may find algebraic equations for $R$.
Below, we consider $m=2$. Let us consider a decomposition ${\mathbb N}=A_{1} \cup A_{2} \cup A_{3}$, where $A_{i} \cap A_{j}=\emptyset$ and the surjective homomorphism
$$\theta:H_{\infty}^{2} \to {\mathbb Z}_{2}^{2}=\langle a, b: a^{2}=b^{2}=(ab)^{2}=1\rangle$$
$$\theta(a_{j}) \in \left\{ \begin{array}{ll}
a, & j \in A_{1}\\
b, & j \in A_{2}\\
ab, & j \in A_{3}
\end{array}
\right\}.
$$

The kernel of $\theta$ is
$$F=\langle a_{i}a_{j}: (i,j) \in A_{1}^{2} \cup A_{2}^{2} \cup A_{3}^{2}\rangle,$$
and it acts freely on $C^{2}_{\infty}({\mathcal B};\emptyset)$.
The Riemann surface $R=C^{2}_{\infty}({\mathcal B};\emptyset)/F$ admits the group $L=H/F \cong {\mathbb Z}_{2}^{2}$ as group of conformal automorphisms such that $R/L=C^{2}_{\infty}({\mathcal B};\emptyset)/H$. 
Now, for each $A_{j}$, we  may find a holomorphic map $f_{A_{j}}:{\mathbb C} \to {\mathbb C}$  with simple zeroes exactly at the points in $\mu_{j}$ for $j \in A_{j}$. Then
we may consider the affine infinite curve
$$C:\left\{\begin{array}{c}
w^{2}=f_{A_{1}}(z) f_{A_{3}}(z)\\
u^{2}=f_{A_{2}}(z) f_{A_{3}}(z)
\end{array}
\right\} \subset {\mathbb C}^{3},
$$
which admits the automorphisms 
$$a(z,u,w)=(z,-u,w), \; b(z,u,w)=(z,u,-w).$$

The group $G=\langle a,b \rangle \cong {\mathbb Z}_{2}^{2}$ is the deck group of the Galois branched covering
 $$\pi:C \to {\mathbb C}:(z,u,w) \mapsto z,$$
whose branch set is ${\mathcal B}_{P}$. Moreover, (a) the set of points $\mu_{j}$, for $j \in A_{1}$, is the image of the fixed points of $a$, 
(b) the set of points $\mu_{j}$, for $j \in A_{2}$ the projection of the fixed points of $b$ and
(c) the set of points $\mu_{j}$, for $j \in A_{3}$ the projection of the fixed points of $ab$.
Again, by the maximality property (in the definition of generalized Fermat curves of infinite type), we must have a subgroup $J<H_{\infty}^{2}$ of index four (and necessarily acting freely on $C_{\infty}^{2}({\mathcal B};\emptyset)$) such that $C=C_{\infty}^{2}({\mathcal B};\emptyset)/J$. By the above properties on $A_{1}$, $A_{2}$ and $A_{3}$, we may see that 
 $J=F$. This in particular permits us to see that $C=R$ and that $G=L$.

%%%%%%%%%%%%%%%%%%
%%%%%%%%%%%%%%%%%%

\end{document}